\newcommand{\cX}{\mathcal{X}}
\newcommand{\cZ}{\mathcal{Z}}
\crefname{hypothesis}{Hypothesis}{Hypotheses}
\title{Finite-time Linear-Quadratic optimal control of partial differential-algebraic equations \thanks{This work was supported by NSERC (Canada) and the Faculty of Mathematics, University of Waterloo.}}
\author{Ala' Alalabi \thanks{Authors are with the  Department of Applied Mathematics,
        University of Waterloo, 200 University Avenue West, Waterloo, ON, Canada  N2L 3G1  \\
  \email{aalalabi@uwaterloo.ca},  \email{ kmorris@uwaterloo.ca} .}
\and Kirsten Morris}
\newcommand*{\addFileDependency}[1]{
  \typeout{(#1)}
  \@addtofilelist{#1}
  \IfFileExists{#1}{}{\typeout{No file #1.}}
}
\begin{document}

\maketitle

\begin{abstract}
We consider finite-time linear-quadratic control of partial differential-algebraic equations (PDAEs) . The discussion is restricted to those that are radial with index $0$; this corresponds to a nilpotency degree of 1. We  establish the existence of a unique minimizing optimal control.  A projection is used to derive a system of  differential Riccati-like equation coupled with an algebraic equation, yielding the solution of the optimization problem in a feedback form. This generalizes the well-known result for  PDEs to this class of PDAEs. These equations and hence the optimal control can be calculated without construction of the projected PDAE.    
Finally, we provide numerical simulations to illustrate application of the theoretical results.
\end{abstract}

\begin{keywords}
Partial differential-algebraic equations, linear-quadratic optimal control, differential Riccati equation, calculus of variation. 
\end{keywords}



\section{Introduction}

Partial differential-algebraic equations (PDAEs), also  known  as infinite-dimensional descriptors \cite{reis2008controllability},  singular distributed  parameter systems \cite{ge2009exact, ge2013well}, and abstract DAEs \cite{lamour2013differential} arise in many applications. Examples include fluid dynamics specifically  divergence-free fluid dynamics \cite{lin1997sequential},  electrical networks \cite{gunther2000joint}, nano-electronics \cite{bartel2008concept}, biology \cite{ahn2019global} and chemical engineering \cite{leung2013systems}. The well-posedness of linear PDAEs has received  attention, for example, \cite{sviridyuk2003linear, jacob2022solvability,erbay2024index,mehrmann2023abstract}  and the references therein. 


Various  definitions for the index of a PDAE exist in the literature\cite{campbell1999index, martinson2000differentiation, wagner2000further, rang2005perturbation}. Unlike (finite-dimensional) DAEs \cite{kunkel2006differential}, the different definitions of indices for PDAEs are not equivalent \cite{erbay2024index}. The concept of the index in PDAEs is crucial, serving as a key indicator of the numerical challenges expected when solving such equations. Moreover, it plays a  role in the assignment of initial and boundary conditions to ensure the consistency relation that results from the algebraic component of the system\cite{martinson2000differentiation}.  
Failure to meet this relation leads to distributions in the solution of the PDAE \cite{ge2019approximate}. 

Controller synthesis methods for PDAEs are needed in order stabilize their dynamics  and/or to  achieve   desired performance. One  valuable technique in  controller design is linear quadratic (LQ) control. Researchers have made progress in establishing LQ controller design for DAEs in finite-dimensional spaces, addressing index-1 \cite{bender1987linear,mehrmann1991autonomous} as well as  general higher-index \cite{reis2019linear,polezhaev2003spatial}. Using a singular value decomposition and calculus of variations, Bender and Laub \cite{1104694} studied the optimization problem for index-1 DAEs on a finite and infinite horizons. Moreover, to solve for the optimal control,  Bender and Laub \cite{1104694} have derived several differential  Riccati equations.  Mehrmann \cite{mehrmann1991autonomous} studied the finite-time LQ control of the same problem and showed that the existence of a unique continuous optimal control depends on the solvability of a two-point boundary value problem. This optimal control was assumed to satisfy  the  consistency relation on the initial condition.  On the other hand, Reis and Voigt \cite{reis2019linear}  used a behavior-based approach to study  optimal control for  DAEs with arbitrary index. Petreczky and Zhuk \cite{petreczkysolutions} also used behaviors to study optimal control for linear DAEs that are not regular.  

Optimal control for DAEs on infinite-dimensional spaces, that is, PDAEs, contains many open problems. In this context, we mention the work done by  Grenkin et al.  \cite{grenkin2016boundary} where they tackled the boundary optimal control problem  for a heat transfer model consisting of a coupled transient and steady-state heat equations. Grenkin et al.  \cite{grenkin2016boundary} showed the existence of weak solutions of this optimization problem under certain assumptions but without taking into consideration  the initial condition's consistency.   More recently,  Gernandt and Reis \cite{gernandt2024linear} studied   LQ optimal control for a class of PDAEs with resolvent index-one in the pseudo-resolvent sense by considering the mild solutions of the system.   Under specific conditions on the Popov operator, and provided that the initial algebraic sub-state of the system adheres to a certain algebraic constraint, Gernandt and Reis \cite{gernandt2024linear} showed the existence of unique minimizing control in the space of square integrable functions.    The optimal cost was shown to  be determined  by a bounded Riccati operator.

This paper studies  the LQ control problem over a finite-horizon for a class of linear PDAEs, those with  {\em radiality-index 0}.  (The radiality index will be defined in the next section.)  Many  equations arising in applications, such   a parabolic-elliptic systems and also the equation used to model the free surface of seepage liquid have  {\em radiality-index } 0 \cite{jacob2022solvability} have index zero.
With  proofs different from those for finite-dimensional systems  \cite{mehrmann1991autonomous, reis2019linear}, a fixed-point argument is used to show that there is a unique continuous optimal control, and this control can  written in feedback form.  We do not impose assumption on the initial values such as been done in \cite{gernandt2024linear}. Instead, by defining a set of admissible controls for the optimization problem, we demand that the control signal maintain the consistency of the initial conditions, thereby preventing distributions in the solutions \cite{ge2019approximate}. Next,  we derive a coupled system consisting of a differential Riccati-like equation and an algebraic equation that leads to the optimal control. It is important to note that specific projections are involved in the  derivation's proof process. However, after the state weight's decomposition through these projections is established, there is no  need to use these projections in computing the optimal control. This approach draws inspiration from the works of Heinkenschloss \cite{heinkenschloss2008balanced} and Stykel \cite{stykel2006balanced}, where they developed a Lyapunov equation within the study of balanced truncation model reduction for specific finite-dimensional systems. Similarly, Duan \cite{duan2010analysis}  derived a Lyapunov equation for a particular class of index-1 DAEs.  Breiten et al. \cite{breiten2019feedback} studied optimal control of the Navier-Stokes equations and derived a certain Riccati equation.  In fact, our work marks a first effort towards deriving differential Riccati-like equations for a general class of differential-algebraic equations, without needing to know the projection explicitly. We demonstrate that although  projections can be a valuable tool in deriving the desired equations, they may not be needed for computation of the control. Finally,  we illustrate the approach  by design of an LQ-optimal control for  an unstable coupled parabolic-elliptic system \cite{ParadaCerpaMorris,Alalabi&Morris}. 

The paper is structured as follows: In Section \ref{section1}, we formulate the problem, and define the class of PDAEs. Section \ref{section2} establishes the existence of an optimal control for the finite-horizon optimization problem. The derivation of a differential Riccati equation, essential for determining this optimal control, is elaborated  in Section \ref{section3}.  Finally, in Section \ref{section5}, numerical simulations are given to illustrate the theoretical results.

\section{Problem statement} \label{section1} Let $\mathcal{X},\; \mathcal{Z}$ and $\mathcal{U}$ be  Hilbert spaces.  Consider a system  modelled by
  \begin{subequations}\label{1}
\begin{align}
    \frac{d}{dt} Ex(t)=& A x(t)+Bu(t), \label{-1} \\
   Ex(0)=&Ex_i,\label{--1}
\end{align}
\end{subequations}
 where  $E \in \mathcal{L}(\mathcal{X}, \mathcal{Z}), \; A: D(A) \subset \mathcal{X} \rightarrow \mathcal{Z}$ is closed and densely defined on $\mathcal{X},$ $B \in \mathcal{L}(\mathcal{U}, \mathcal{Z})$. The initial condition    $x(0) = x_i \in \mathcal{X}$. Systems of the form \eqref{1} reduce to  classical infinite-dimensional systems  \cite{curtain2020introduction} when $E$ is  invertible.  The situation where  $E$ is non-invertible, and /or unbounded is of particular interest in this study. 
 
 It will be assumed throughout this paper that the  PDAE \eqref{--1} is {\em radial with degree 0} \cite{jacob2022solvability,sviridyuk2003linear}.  
 This implies  that exist projections  $P^{\mathcal{X}_1}$ and $P^{\mathcal{Z}_1}$ with ranges $\cX_1$ and $\cZ_1$ respectively, satisfying the  two assumptions below.   Let $\cX_0$ indicate the complement of $\cX_1$ and define similarly $\cZ_0, \cZ_1:$ 
\begin{align}
\mathcal{X}=\mathcal{X}_1 \oplus \mathcal{X}_0, \quad  \mathcal{Z}=\mathcal{Z}_1 \oplus \mathcal{Z}_0 .
\end{align} 
Use the projections to define
 \begin{subequations} \label{operators2}
\begin{align}
 &   E_0= E|_{\mathcal{X}_0}, \; \; & & E_1= E|_{\mathcal{X}_1}, &\; \; &  A_0= A|_{\mathcal{X}_0 \cap D(A)}, &\; \; &  A_1= A|_{\mathcal{X}_1 \cap D(A) },    \\
   & B_0= P^{\mathcal{Z}_0}B, & \; \; &   B_1= P^{\mathcal{Z}_1} B.   
\end{align}
\end{subequations}

\noindent \textbf{Assumption 1.}  
\begin{subequations} \label{relations}
        \begin{align}
& \text{For all } x \in \mathcal{X}, \; P^{\mathcal{Z}_1}Ex =EP^{\mathcal{X}_1}x,
\label{relation1}\\
& \text{For all } x \in D(A),\; P^{\mathcal{X}_1}x \in D(A)\; \text{and } P^{\mathcal{Z}_1}Ax= AP^{\mathcal{X}_1}x. \label{relation2}\end{align}
 \end{subequations}
 \noindent \textbf{Assumption 2.} 
 The operator $A_0 : D(A_0) \rightarrow \mathcal{Z}_0 $ has a bounded inverse: $   A_0^{-1} \in \mathcal{L}(\mathcal{Z}_0, \mathcal{X}_0), $ and the  operator \( E_0 \) is the zero operator. Also, $E_1 \in \mathcal{L}(\mathcal{X}_1, \mathcal{Z}_1)$ maps into $\mathcal{Z}_1$, and  has  a bounded inverse: $   E_1^{-1} \in \mathcal{L}(\mathcal{Z}_1, \mathcal{X}_1). $   The operator $A_1: D(A) \cap \mathcal{X}_1 \to \mathcal{Z}_1$ is closed and densely defined, and $ E_1^{-1} A_1 $ generates a $C_0-$semigroup operator $T(t)$ on $\mathcal{X}_1$.

Letting $I^{\cX}$ and $I^{\cZ}$ indicate  the identity operator on the  space $\mathcal{X}$ and $\mathcal{Z}$ , respectively, we define
\begin{align*}
   &P^{\mathcal{X}_0} = I^{\cX}-P^{\mathcal{X}_1}, \quad P^{\mathcal{Z}_0} = I^{\cZ}-P^{\mathcal{Z}_1}. 
\end{align*}
We also define
 \begin{eqnarray} 
   && \Tilde{P}^{\mathcal{X}}=\begin{bmatrix}
         P^{\mathcal{X}_1} \\P^{\mathcal{X}_0} 
    \end{bmatrix} \in \mathcal{L}(\mathcal{X},  \mathcal{X}_1 \times \mathcal{X}_0 ),    \quad  \Tilde{P}^{\mathcal{Z}}=\begin{bmatrix}
        P^{\mathcal{Z}_1} \\  P^{\mathcal{Z}_0} 
    \end{bmatrix} \in \mathcal{L}(\mathcal{Z}, \mathcal{Z}_1 \times \mathcal{Z}_0),
 \label{P&D} 
\end{eqnarray} 
\begin{eqnarray}
     && (\Tilde{P}^{\mathcal{X}})^{-1}=\begin{bmatrix}
         I^{\cX_1} & I^{\cX_0} 
    \end{bmatrix} \in \mathcal{L}(  \mathcal{X}_1 \times \mathcal{X}_0, \mathcal{X} ), \label{inverse-x}\\
    && (\Tilde{P}^{\mathcal{Z}})
    ^{-1}=\begin{bmatrix}
        I^{\cZ_1} & I^{\cZ_0} 
    \end{bmatrix} \in \mathcal{L}(  \mathcal{Z}_1 \times \mathcal{Z}_0, \mathcal{Z} ). \label{inverse-z}
\end{eqnarray}

  For convenience of notation, define
\begin{align}
    \Tilde{A}_1= E_1^{-1} A_1 ,  \; \; \;  \Tilde{B}_1= E_1^{-1} B_1, \; \; \;  \Tilde{B}_0 = A_0^{-1} B_0, \label{convenient-defns}
\end{align} 
and
\begin{align*}
    P^{\mathcal{X}_1} x_i = (x_i)_1, \quad  P^{\mathcal{X}_0} x_i = (x_i)_0 .
    \end{align*}

 Pre-multiply system \eqref{1} with the operator $\tilde{P}^{\mathcal{Z}}$ and  use   \eqref{relation1}-\eqref{relation2},  
\begin{subequations}  \label{composition} 
\begin{align}
      \frac{d}{dt}  x_1(t) =& \Tilde{A}_1 x_1(t)+ \tilde{B}_1 u(t),\; \;   x_1(0)  = (x_i)_1 ,  \label{a10} \\
    0 =&  x_0(t) +  \tilde{B}_0 u(t), \; \; \; \; x_0(0)=(x_i)_0  , \label{a11} 
\end{align}
\end{subequations}  
In common with finite-dimensions, this is known as  the Weierstra$\beta$  form; the systems considered here have degree of nilpotency 1 \cite{erbay2024index}.

 For continuous control  $u(t)\in C(0,t_f;\mathcal{U})$,  the mild solution  of \eqref{a10} is \cite[Chapter 5]{curtain2020introduction}
\begin{align}
     x_1(t)=  T(t) (x_i)_1 +  \int_{0}^{t} T(t-s) \Tilde{B}_1u(s) ds .\label{solutions-x1}
\end{align}
For system \eqref{a11} (see \cite{ge2019approximate}) 
\begin{align}
    x_0(t) = -\Tilde{B}_0u(t) + \delta(t) \Big((x_i)_0+\Tilde{B}_0u(0)\Big), \quad t\geq 0, \label{x0solution}
\end{align}
where $ \delta(t)$ is the Dirac delta function. If $(x_i)_0 = -\Tilde{B}_0u(0)$, then the distributional component of the solution \eqref{x0solution} is eliminated. 
 This equation is known as the consistency condition on the initial condition in the DAE literature \cite[Theorem 2.12]{kunkel2006differential}. With  a consistent initial condition 
\begin{align}
     x_0(t) = -\Tilde{B}_0u(t), \quad t\geq 0, \label{solutions-0-consistent}
\end{align}
and
\begin{align}
    x(t)&=- \Tilde{B}_0 u(t) +   T(t) (x_i)_1+  \int_{0}^{t} T(t-s) \Tilde{B}_1 u(s) ds . \label{solution-PDAE}
\end{align}

The optimal control problem  is to minimize, for  arbitrary initial condition \( x_i \),  the quadratic performance criterion 
\begin{align}
J(x_i,u ; t_f) =& \langle x(t_f), G x(t_f)\rangle_{\mathcal{X}}   + \int_{0}^{t_f} \langle x(s), Q x(s)\rangle_{\mathcal{X}}  \nonumber \\
& + \langle u(s), R u(s)\rangle_{\mathcal{U}} ds . \label{cost}
\end{align}
As usual, $R$ is assumed to be coercive, that is, $R$ is self-adjoint and $R\geq \epsilon I$ for some $\epsilon>0 $. Also, \( G \) and \( Q \) are self-adjoint non-negative operators. The  notation $\langle \cdot,\cdot\rangle _{\mathcal{U}}$ stands for the inner product on some Hilbert space $\mathcal{U}$.

In order to avoid distributions in the solution, the set of admissible controls for minimization of the cost $J(x_i, u;t_f)$
is
\begin{align}
 \mathcal{U}_{a} = \{ u(t) \in C([0,t_f];\mathcal{U}): \;  (x_i)_0 =-  \tilde{B}_0 u(0)\}. \label{admissible-set}
\end{align} 
This leads to a formal definition of the optimal control problem as
\begin{equation}
    \inf_{u \in \mathcal{U}_{a}} J(x_i, u; t_f), \label{optimal-control-problem}
\end{equation}
subject to \( x(t) \) that solves \eqref{1}.

The notation $P^{\mathcal{Z}_1*}$ and $P^{\mathcal{Z}_0*}$ shall denote the adjoint operators of  $P^{\mathcal{Z}_1}$ and $P^{\mathcal{Z}_0}$, respectively, and so
\begin{align*}
    &P^{\mathcal{Z}_1*} : \mathcal{Z} \to  \mathcal{Z}_1, \quad  P^{\mathcal{Z}_0*} : \mathcal{Z} \to  \mathcal{Z}_0,
\end{align*}
In a similar way, we define $P^{\mathcal{X}_1*}$ and $P^{\mathcal{X}_0*}$. From \eqref{relations}, we also have that
\begin{subequations} \label{relations-adjoint}
        \begin{align}
& \text{For all } z \in \mathcal{Z}, \; P^{\mathcal{X}_1*}E^* z =EP^{\mathcal{Z}_1*}z,
\\
& \text{For all } z \in D(A^*),\; P^{\mathcal{Z}_1*} z \in D(A^*)\; \text{and } P^{\mathcal{X}^*_1}A^* z = A^* P^{\mathcal{Z}_1*}z. 
\end{align}
 \end{subequations}
 
Throughout the  paper, it is assumed that the  final cost  vanishes on the subspace $\mathcal{X}_0$
    \begin{align}
&P^{{\mathcal{X}_1*}}  G P^{{\mathcal{X}}_0} = P^{{\mathcal{X}_0*}} G P^{{\mathcal{X}}_1}=P^{{\mathcal{X}_0*}} G P^{{\mathcal{X}}_0}=0 .\label{assumption1} 
\end{align}
Since in many applications 
$G=0$, this assumption is in practice not  difficult to meet. 
It is also assumed
\begin{align}
&    P^{{\mathcal{X}_0*}}Q P^{{\mathcal{X}_1}}  = P^{{\mathcal{X}_1*}}  Q P^{{\mathcal{X}_0}}= 0. \label{assumption2}
\end{align}
We also define the following restrictions on the subspaces $\mathcal{X}_1 $ and $\mathcal{X}_0$,
\begin{align*}
& G_1= G|_{\mathcal{X}_1} , \quad    Q_1= Q|_{\mathcal{X}_1},   \quad Q_0=Q|_{\mathcal{X}_0}. 
\end{align*}
Note that the operator $G_1$ maps to $X_1$ due to assumption \eqref{assumption1} and so $G_1 \in \mathcal{L}(\mathcal{X}_1)$. Similarly, \eqref{assumption2} implies that $Q_1\in \mathcal{L}(\mathcal{X}_1) $  and $Q_0\in \mathcal{L}(\mathcal{X}_0).$

\section{Existence of the optimal control} \label{section2}
Given the control problem defined in the previous section, and in particular the definition of the set of admissible controls in \eqref{admissible-set}, the set of admissible variations  $\mathcal{V}_{a}$, that is those for which if $u \in \mathcal{U}_a ,$ $v \in  \mathcal{V}_{a}$ then  $ u+v \in \mathcal{U}_a$ is
\begin{align}
 \mathcal{V}_{a} = \{ h(t) \in C([0,t_f];\mathcal{U}): \; 0 =-  \tilde{B}_0 h(0)\}. \label{admissible-variation}
\end{align} 

 The next proposition shows that  an arbitrary variation in the control leads to a change in cost that is a sum of quadratic and linear terms. 
 
\begin{prop} \label{proposition1}
 \label{Th1} Consider any $u(t) \in \mathcal{U}_a$, $x_i \in \mathcal{X}$ and $h(t) \in  \mathcal{V}_{a}$. Define,
 letting $T^*(t)$ indicate the $C_0$-semigroup on $\mathcal{Z}_1$ generated by  $\Tilde{A}_1^*,$
\begin{align}
    z_u(t)= & 2 \Big[ \tilde{B}_1^*  T^*(t_f-t) G_1 x_1(t_f) + \tilde{B}_1^*  \int_t^{t_f}  T^*(r-t)  Q_1 x_1(r) dr \nonumber\\
 &-\tilde{B}_0^* Q_0 x_{0}(t)  + R u(t) \Big], \label{zu}
\end{align}
\begin{align}
        \phi(t) h(t)  = \int_{0}^{t} T(t-s) \Tilde{B}_1h(s) ds 
         -   \tilde{B}_0 h(t) . \label{phi-}
\end{align}  
Then  variation 
\begin{align}
     J(x_i,u+h ; t_f) - & J(x_i,u ; t_f) =  \int_0^{t_f} \langle z_u(s),h(s)\rangle_{\mathcal{U}}   ds   \nonumber \\
    & +  \langle \phi(t_f)h(t_f), G \phi(t_f)h(t_f)\rangle_{\mathcal{X}}    +\int_0^{t_f} \langle \phi(s)h(s), Q \phi(s)h(s)\rangle_{\mathcal{X}}    \nonumber \\
    & + \langle h(s), Rh(s)\rangle_{\mathcal{U}}   ds. \label{difference}
\end{align}

\end{prop}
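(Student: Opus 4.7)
The approach is to exploit the linearity of the dynamics: the map $u \mapsto x$ given by \eqref{solution-PDAE} is affine, so replacing $u$ by $u+h$ shifts the state by a quantity depending only on $h$. First I would verify that, by linearity of \eqref{a10}--\eqref{a11} and \eqref{solution-PDAE}, the perturbed trajectory satisfies
\begin{equation*}
x^{u+h}(t) - x^{u}(t) \;=\; \int_{0}^{t} T(t-s)\,\tilde{B}_1 h(s)\,ds \;-\; \tilde{B}_0 h(t) \;=\; \phi(t)h(t),
\end{equation*}
with the first summand lying in $\mathcal{X}_1$ and the second in $\mathcal{X}_0$. The admissibility condition $\tilde{B}_0 h(0)=0$ in $\mathcal{V}_a$ ensures that $x^{u+h}$ remains distribution-free, so that all cost integrals are well-defined.

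Next I would substitute into \eqref{cost} and expand. Since $G$, $Q$, $R$ are self-adjoint, each quadratic form splits cleanly as
\begin{equation*}
\langle y + \phi h,\, M(y+\phi h)\rangle - \langle y, My\rangle \;=\; 2\langle My,\phi h\rangle + \langle \phi h, M \phi h\rangle,
\end{equation*}
for $M \in \{G,Q\}$, together with the analogous identity for the $R$-term producing $2\langle Ru,h\rangle_{\mathcal{U}} + \langle h,Rh\rangle_{\mathcal{U}}$. The quadratic pieces that appear are precisely those on the right-hand side of \eqref{difference}, so the remaining task is to reshape the three linear (cross) terms into the single integral $\int_0^{t_f}\langle z_u(s),h(s)\rangle_{\mathcal{U}}\,ds$.

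The key algebraic step uses the block-diagonal structure imposed by assumptions \eqref{assumption1}--\eqref{assumption2}: $G$ couples only the $\mathcal{X}_1$ components, and $Q$ decomposes as $Q_1 \oplus Q_0$ under the splitting $\mathcal{X}=\mathcal{X}_1\oplus\mathcal{X}_0$. Therefore
\begin{equation*}
\langle G x^u(t_f),\phi(t_f)h(t_f)\rangle_{\mathcal{X}} = \Bigl\langle G_1 x_1(t_f),\int_0^{t_f} T(t_f-s)\tilde{B}_1 h(s)\,ds\Bigr\rangle_{\mathcal{X}_1},
\end{equation*}
and the running $Q$-term splits into a piece involving $Q_1 x_1$ paired with $\int_0^s T(s-r)\tilde{B}_1 h(r)\,dr$ and a piece involving $Q_0 x_0$ paired with $-\tilde{B}_0 h(s)$. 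Moving $T$, $\tilde{B}_1$, and $\tilde{B}_0$ to the other side of the inner products via their adjoints and applying Fubini's theorem to exchange the order of integration in the $Q_1$ double integral then produces exactly the four summands defining $z_u(t)$ in \eqref{zu}, after combining with the $2\langle Ru,h\rangle_{\mathcal{U}}$ cross term.

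The main obstacle I anticipate is purely bookkeeping: correctly tracking the projections so that every $G$ appearing in a cross term is replaced by $G_1$ acting on $x_1$ alone (and similarly for $Q$), and justifying the Fubini interchange, which follows from continuity of $s\mapsto T^*(r-s)Q_1 x_1(r)$ and of $h$ on the compact interval $[0,t_f]$. No analytic subtleties beyond this appear, since $\tilde{B}_0$, $\tilde{B}_1$, $E_1^{-1}$, $G_1$, $Q_0$, $Q_1$, and $R$ are all bounded by Assumption~2 and the standing hypotheses.
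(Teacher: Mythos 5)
Your proposal is correct and follows essentially the same route as the paper's proof: expand the cost difference using the affine dependence of the trajectory on the control via $\phi(t)h(t)$, split the cross terms with the block structure of $G$ and $Q$ from \eqref{assumption1}--\eqref{assumption2}, and pass adjoints/Fubini to collect the linear part into $\int_0^{t_f}\langle z_u(s),h(s)\rangle_{\mathcal{U}}\,ds$. The bookkeeping and justifications you flag (consistency of $u+h$, boundedness of the operators, the Fubini interchange) are exactly the ingredients the paper uses.
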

\begin{proof}
     The proof is similar  to the classical argument in $LQ-$optimal control for ODEs e.g. \cite{morris2000introduction}.
Let $u(t) \in \mathcal{U}_a$ and let $h(t)\in \mathcal{V}_a$. Then, $(u+h)(t) \in \mathcal{U}_a$ since  we have that  $(u+h)(t) \in C(0,t_f; \mathcal{U})$, and
\begin{align}
   -  \tilde{B}_0 (u+h)(0)  =  (x_i)_0.
\end{align}
Let $x(t)$ indicate the trajectory corresponding to $u(t)$, then from \eqref{solution-PDAE}
\begin{align}
    x(t) = -\tilde{B}_0 u(t)+  T(t) (x_i)_1 +   \int_{0}^{t} T(t-s) \Tilde{B}_1u(s) ds . 
    \end{align}
With the definition of $\phi(t) h(t)$ in \eqref{phi-}, the trajectory corresponding to $(u+h)(t)$ is 
\begin{align}
    x_{u+h}(t) = x(t) + \phi(t) h(t).
\end{align}
 Thus, 
\begin{align}
     J(x_i, u+h;t_f) &-  J(x_i, u;t_f)
    =  2 \langle x(t_f), G \phi(t_f) h(t_f)\rangle_{\mathcal{X}}   + 2 \Big( \int_0^{t_f}  \langle u(s), Rh(s)\rangle_{\mathcal{U}}     \nonumber \\
    &+ \langle x(s), Q \phi(s)h(s)\rangle_{\mathcal{X}}   ds \Big) + \langle \phi(t_f) h(t_f), G \phi(t_f) h(t_f)\rangle_{\mathcal{X}}  \nonumber \\
    &+ \int_0^{t_f} \langle \phi(s)h(s), Q \phi(s)h(s)\rangle_{\mathcal{X}}   + \langle h(s), Rh(s)\rangle_{\mathcal{U}}   ds. 
\label{difference-2}
\end{align}
Setting  
\begin{align}
    \nabla J(u,h,t_f) & = 2 \langle x(t_f), G \phi(t_f) h(t_f)\rangle_{\mathcal{X}}   + 2 \Big( \int_0^{t_f} \langle x(s), Q \phi(s)h(s)\rangle_{\mathcal{X}}   \nonumber \\
    &+ \langle u(s), Rh(s)\rangle_{\mathcal{U}}   ds \Big), 
\end{align}
we rewrite \eqref{difference-2} as
\begin{align}
     J(x_i,u+h ; t_f) - J(x_i,u ; t_f)& =\nabla J(u,h,t_f) + \langle \phi(t_f) h(t_f), G \phi(t_f) h(t_f)\rangle_{\mathcal{X}}  \nonumber \\
    &+ \int_0^{t_f} \langle \phi(s)h(s), Q \phi(s)h(s)\rangle_{\mathcal{X}}   + \langle h(s), Rh(s)\rangle_{\mathcal{U}}   ds. \label{part4}
\end{align}
In the following steps, we compute a $z_u \in L_2(0,t_f; \mathcal{U})$ in a way that 
\begin{align}
    \nabla J (u,h,t_f) = \int_0^{t_f} \langle z_u(s) ,h(s)\rangle_{\mathcal{U}}   ds.
\end{align}
Using the definition of $\phi(t) h(t)$ and the assumptions \eqref{assumption1} and \eqref{assumption2} on the operators $G$ and $Q$, we write
\begin{align}
    & \nabla J(u,h,t_f) = 2 \Big[ \langle  x_1(t_f), G_1 \int_{0}^{t_f} T(t_f-s)   \tilde{B}_1 h(s) ds \rangle_{\mathcal{X}}   +\int_0^{t_f} \langle u(s) , Rh(s)\rangle_{\mathcal{U}}    \nonumber\\
    & +  \langle x_1(s), Q_1  \int_{0}^{s} T(s-r)  \tilde{B}_1 h(r) dr  \rangle_{\mathcal{X}}   -    \langle x_0(s), Q_0   \tilde{B}_0  h(s)\rangle_{\mathcal{X}}    ds \Big]. \nonumber \\
    \label{eq25}
\end{align}
To simplify the right-hand-side of \eqref{eq25}, we note that after straightforward calculations
\begin{align}
   &  \langle x_1(t_f), G_1  \int_{0}^{t_f} T(t_f-s)   \tilde{B}_1 h(s) ds\rangle_{\mathcal{X}}    +  \int_0^{t_f} \langle x_1(s), Q_1 \nonumber \\
   &  \int_{0}^{s} T(s-r)  \tilde{B}_1  h(r) dr \rangle_{\mathcal{X}}   ds  =   \int_0^{t_f}   \langle  \tilde{B}_1^* T^*(t_f-s)  G_1 x_1(t_f) \nonumber \\
   & + \tilde{B}_1^* \int_s^{t_f} T^*(r-s)  Q_1 x_1(r)dr, h(s)\rangle_{\mathcal{U}}   ds  , \label{part1}
\end{align}
and
\begin{align}
     -2 \int_0^{t_f}\langle   x_0(s), Q_0  \tilde{B}_0 h(s)\rangle_{\mathcal{X}}   ds= - 2  \int_0^{t_f}\langle   \tilde{B}_0^*  Q_0x_0(s), h(s)\rangle_{\mathcal{U}}   ds  . \label{part2}
\end{align}
Subbing \eqref{part1} and \eqref{part2} in \eqref{eq25}, we arrive at 
\begin{align}
    \nabla J (u,h,t_f) &= \int_{0}^{t_f} \Big[ \langle  2\tilde{B}_1^*  T^*(t_f-s)   G_1 x_1(t_f)  \nonumber \\
    &  +  2 \tilde{B}_1^* \int_s^{t_f} T^*(r-s) Q_1x_1(r)dr -   2 \tilde{B}_0^* Q_0x_0(s) + 2 Ru(s) ,   h(s)\rangle     \Big] ds.\label{part5}
\end{align}
Defining  $z_u(t)$ as given in \eqref{zu}, the conclusion follows by combining \eqref{part4} and \eqref{part5}.
\end{proof}

\smallskip

The next theorem shows the optimal control problem  \eqref{optimal-control-problem} has a solution.

\begin{theorem} \label{fixed-point}
    The control input
    \begin{align}
        u^{opt}(t) =& -R^{-1}  \Big[ \tilde{B}_1^* T^*(t_f-t)  G_1 x^{opt}_1 (t_f) + \tilde{B}_1^* \int_t^{t_f}  T^*(r-t)  Q_1 x^{opt}_1(r) dr \nonumber\\
 &- \tilde{B}^*_0 Q_0 x^{opt}_{0}(t) \Big], \label{optimal control-feedback}   
    \end{align}
    is the unique solution of the optimization problem \eqref{optimal-control-problem}. Here $ x^{opt}(t)$ is the system state corresponding to  control $ u^{opt}(t)$ , and $x^{opt}(t) = x_1^{opt}(t) + x_0^{opt}(t)$. 
\end{theorem}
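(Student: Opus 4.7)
The plan is to combine the variation formula of Proposition~\ref{proposition1} with a Banach fixed-point argument, as outlined in the introduction.

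First, I would extract the optimality condition. Because $R \ge \epsilon I$, the quadratic part of~\eqref{difference} is bounded below by $\epsilon \int_0^{t_f}\|h(s)\|^2_{\mathcal U}\,ds$, which is strictly positive for every nonzero $h \in \mathcal V_a$. This makes $J(x_i,\cdot\,;t_f)$ strictly convex on the affine set $\mathcal U_a$, so any minimizer is unique. Moreover, a control $u \in \mathcal U_a$ is a minimizer if and only if $z_u \equiv 0$: sufficiency is immediate from the positivity of the quadratic term, while necessity follows by replacing $h$ with $\lambda h$, dividing by $\lambda$ and letting $\lambda \to 0^+$ in~\eqref{difference}, then using the density in $L^2(0,t_f;\mathcal U)$ of continuous functions vanishing at $t=0$, together with the continuity of $z_u$, to conclude $z_u \equiv 0$ on $[0,t_f]$.

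Next, I would recast $z_u = 0$ as an integral equation and invoke Banach's fixed-point theorem for existence. For $u \in \mathcal U_a$ one has $x_0(t) = -\tilde B_0 u(t)$ by~\eqref{solutions-0-consistent}, so $z_u = 0$ is equivalent to
\begin{equation*}
  u(t) = -\hat R^{-1}\Bigl[\tilde B_1^* T^*(t_f - t) G_1\, x_1(t_f; u) + \tilde B_1^*\!\int_t^{t_f}\! T^*(r - t) Q_1\, x_1(r; u)\,dr\Bigr],
\end{equation*}
where $\hat R := R + \tilde B_0^* Q_0 \tilde B_0$ is bounded and coercive (hence boundedly invertible) and $x_1(\cdot;u)$ is the mild solution~\eqref{solutions-x1}. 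The right-hand side defines an operator $\mathcal T$; feeding the standard estimate $\|x_1(t;u)-x_1(t;v)\|_{\mathcal X_1} \le M_T \|\tilde B_1\| \int_0^t \|u(s)-v(s)\|_{\mathcal U}\,ds$ (with $M_T := \sup_{s \in [0,t_f]} \|T(s)\|$) into $\mathcal T$ yields a Lipschitz bound on $\mathcal T$ that is linear in $t_f$. Upgrading this to a strict contraction via an exponentially weighted sup norm $\|u\|_\lambda := \sup_t e^{\lambda(t_f - t)} \|u(t)\|$ with $\lambda$ large (or by iterating $\mathcal T$ on a fine partition of $[0,t_f]$), Banach's theorem then delivers a unique continuous fixed point $u^{opt}$.

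The main obstacle I anticipate is to ensure that this fixed point actually lies in $\mathcal U_a$, i.e.\ that $-\tilde B_0 u^{opt}(0) = (x_i)_0$: the operator $\mathcal T$ was derived under admissibility but does not manifestly preserve the affine constraint at $t=0$. The cleanest remedy is to assume $\mathcal U_a$ is non-empty (a natural well-posedness hypothesis on $(x_i)_0$), fix any $u_c \in \mathcal U_a$, decompose $u = u_c + v$ with $v \in \mathcal V_a$, and run the contraction on the closed linear subspace $\mathcal V_a \subset C([0,t_f];\mathcal U)$; admissibility of $u = u_c + v$ then holds by construction. Combined with the first-step characterization and the identity $x_0^{opt}(t) = -\tilde B_0 u^{opt}(t)$, this yields the feedback formula~\eqref{optimal control-feedback} claimed in the theorem.
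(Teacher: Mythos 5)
Your overall strategy is the paper's: characterize the minimizer through $z_u\equiv 0$ via Proposition~\ref{proposition1}, rewrite this as a fixed-point equation for the coercive operator $\hat R=R+\tilde B_0^*Q_0\tilde B_0$ (the paper's $\tilde R$), get existence/uniqueness of the solution of $z_u=0$ by a fixed-point argument, and conclude minimality and uniqueness from the strictly positive quadratic term in \eqref{difference}. The genuine gap is the step where you claim a contraction. The map $\mathcal T$ is not of Volterra (causal) type: $(\mathcal T u)(t)$ depends on $u(s)$ for \emph{all} $s\in[0,t_f]$, both through the terminal term $G_1x_1(t_f;u)$ and through the composition of the backward integral $\int_t^{t_f}T^*(r-t)Q_1(\cdot)\,dr$ with the forward integral $\int_0^{r}T(r-s)\tilde B_1u(s)\,ds$; the resulting Lipschitz kernel behaves like $M\,(t_f-\max(t,s))$ and is supported on the whole square $[0,t_f]^2$, not on a triangle. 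The exponential-weight trick needs a triangular kernel: with $\|u\|_\lambda=\sup_t e^{\lambda(t_f-t)}\|u(t)\|$ the weight ratio $e^{\lambda(s-t)}$ appearing under the integral is exponentially \emph{large} on the anticausal part $s>t$ (already the $G_1x_1(t_f;u)$ term destroys the estimate), and the opposite weight fails on the causal part; so no choice of $\lambda$ makes the weighted Lipschitz constant small. Likewise, iterating on a fine partition of $[0,t_f]$ is a continuation argument that needs causality to patch local fixed points, which is unavailable here since $(\mathcal T u)(t)$ on any subinterval involves $u$ on all of $[0,t_f]$. (Also, the one-step Lipschitz bound is quadratic, not linear, in $t_f$.) The paper proceeds differently at exactly this point: it estimates the iterates directly, obtaining $\|F^nu_1-F^nu_2\|_\infty\le \frac{a^n}{n!}\|u_1-u_2\|_\infty$ with $a=Mt_f^2$, and invokes the fixed-point theorem for maps whose $n$-th iterate is a contraction; a single-step contraction as you propose would force a smallness restriction on $t_f$ (or on $Q_1,G_1$ relative to $R$) that the theorem does not assume.

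A second, smaller issue concerns your remedy for admissibility. Running the iteration on the affine set $u_c+\mathcal V_a$ only makes sense if $\mathcal T$ maps this set into itself, i.e. if $-\tilde B_0(\mathcal T u)(0)=(x_i)_0$ whenever $-\tilde B_0u(0)=(x_i)_0$; you do not verify this, and it is not evident, because the value $(\mathcal T u)(0)$ is dictated by the optimality condition \eqref{zu} rather than by the constraint in \eqref{admissible-set}. The paper instead solves the fixed-point problem on all of $C([0,t_f];\mathcal U)$ and then argues membership of the fixed point in $\mathcal U_a$ from the way \eqref{difference} was derived with variations in $\mathcal V_a$. If you keep your decomposition $u=u_c+v$, you still owe a separate argument that the unique continuous solution of $z_u=0$ actually satisfies the consistency condition; the contraction itself does not deliver it.
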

\begin{proof}The proof takes two main steps.  First, we show that $u^{opt}(t)$  is the unique solution in $C(0,t_f;\mathcal{U})$ solving equation 
\begin{align}
    z_u(t)=0. \label{fixed-point-argument}
\end{align}
 We rewrite \eqref{fixed-point-argument} in terms of a fixed-point operator. Define  the  operator
    \begin{align}
        (V_1 u)(t) =  T(t) (x_i)_1 +  \int_0^t T(t-s) \tilde{B}_1 u(s) ds. \label{V-1}
    \end{align}
It is straightforward to show that $V_1u(t):C([0,t_f];\mathcal{U})   \to C([0,t_f];\mathcal{U}),$ by using the fact that $T(t)$ is a $C_0$-semigroup, and following a similar argument as the one given in \cite[Lemma 5.1.5]{curtain2020introduction}.
Next,  referring to \eqref{zu}, we rewrite equation \eqref{fixed-point-argument} 
by using   \eqref{solutions-x1} and \eqref{solutions-0-consistent}
\begin{align*}
    Ru(t) &=   -\tilde{B}_1^*  T^*(t_f-t) G_1 x_1 (t_f)  -\tilde{B}_1^* \int_t^{t_f}  T^*(r-t)  Q_1 \Big(
 T(r) (x_i)_1  \nonumber \\
&+  \int_0^r T(r-s)  \tilde{B}_1 u(s) ds \Big) dr -\tilde{B}^*_0 Q_0  A_0^{-1} B_0 u(t). 
\end{align*}
The previous equation can be now written with the help of operator $V_1$ as

\begin{align*}
    Ru(t) &=   -\tilde{B}_1^* T^*(t_f-t) G_1 x_1 (t_f)  - \tilde{B}_1^*  \int_t^{t_f}  T^*(r-t)   Q_1 (V_1u)(r) dr \nonumber\\
 &-\tilde{B}^*_0 Q_0  A_0^{-1} B_0 u(t) , 
\end{align*} 
that is 
\begin{align}
  (R+ \Tilde{B}^*_0  Q_0  \Tilde{B}_0)  u(t) &= -  \tilde{B}_1^* T^*(t_f-t)  G_1 x_1 (t_f) \nonumber \\
  & - \tilde{B}_1^* \int_t^{t_f}  T^*(r-t)    Q_1 (V_1u)(r) dr . \label{input-new}
\end{align}
Since the operator $ \Tilde{B}^*_0  Q_0  \Tilde{B}_0$ is positive semi-definite and $R$ is coercive, it follows that $R+ \Tilde{B}^*_0  Q_0  \Tilde{B}_0$ is also coercive, and so has a bounded inverse. For convenience of notation, we set 
\begin{align}
    \tilde{R}=R+ \Tilde{B}^*_0  Q_0  \Tilde{B}_0.
\end{align}
For $u(t) \in  C([0,t_f];\mathcal{U}) $,  define  $F:C([0,t_f];\mathcal{U})   \to C([0,t_f];\mathcal{U}) $ as
\begin{align}
(F u)(t) &= -  (\tilde{R})^{-1}  \Tilde{B}_1^* T^*(t_f-t) G_1 x_1 (t_f) \nonumber \\
   & - (\tilde{R})^{-1} \Tilde{B}_1^* \int_t^{t_f}  T^*(r-t)   Q_1 (V_1u)(r) dr .
\end{align}
With this definition,    equation \eqref{input-new} can be written
\begin{align}
    u(t)=(F u)(t). \label{F-eq}
\end{align}
It will now be shown that $F^n$ is a contraction for large enough $n$.   For $u_1, \; u_2 \in C(0,t_f; \mathcal{U})$
\begin{align*}
&|(Fu_1)(t) - (Fu_2)(t)| \\
&= |(\tilde{R})^{-1} \Tilde{B}_1^* 
\int_{t}^{t_f} T^*(r-t)  Q_1 \Big[ (V_1u_1)(r)  - (V_1u_2)(r) \Big] \, dr| 
\\
& \leq \|(\tilde{R})^{-1}  \Tilde{B}_1^*\| 
\int_{t}^{t_f} \| T^*(r-t) \|   \|  Q_1\|  |  (V_1u_1)(r)  - (V_1u_2)(r)|   dr \nonumber \\
 &  \leq \|(\tilde{R})^{-1}  \Tilde{B}_1^*\| 
\int_{t}^{t_f} \| T^*(r-t) \|   \| Q_1\|   \int_0^{r}  \|T(r-s)\| \| \tilde{B}_1\|  |u_1(s) -u_2(s) |  ds   dr . \nonumber 
 \end{align*}
Since $T(t)$ is bounded on every finite subinterval of $[0, \infty)$; see \cite[Theorem 2.1.7 a]{curtain2020introduction}, and so is $T^*(t)$,  the operators $(\Tilde{R})^{-1} \Tilde{B}_1^* =   ( R+ \Tilde{B}^*_0  Q_0  \Tilde{B}_0)^{-1} \Tilde{B}_1^* \in \mathcal{L}(\mathcal{X}_1, \mathcal{U}) $,  and   $ Q_1$, $\tilde{B}_1$ are also bounded linear operators, there is  a constant $M>0$ such that 
\begin{equation*}
    |(Fu_1)(t) - (Fu_2)(t)| 
  \leq M  \int_t^{t_f} \int_0^{r} |u_1(s) -u_2(s) |  ds dr .
\end{equation*}
 Thus,
 \begin{align*}
|(Fu_1)(t) - (Fu_2)(t)| 
 & \leq M \| u_1-u_2 \|_{\infty} \int_t^{t_f} \int_0^{r} ds dr \nonumber \\ & \leq  t_f (t_f-t) M \| u_1-u_2 \|_{\infty}.
\end{align*}
 Replace $u_1$ and $u_2$ in the inequalities above with $Fu_1$ and $Fu_2$, respectively,
\begin{align}
     |(F^2u_1)(t) - (F^2u_2)(t)|& \leq M \int_{t}^{t_f}\int_0^{r} |(Fu_1)(s)-(Fu_2)(s)| ds dr \nonumber \\
     &  \leq  M^2 t_f \|u_1-u_2 \|_{\infty}  \int_{t}^{t_f} \int_0^{r} (t_f-s) ds dr \nonumber \\
     & = M^2 t_f \|u_1-u_2 \|_{\infty}  \int_{t}^{t_f} \frac{(t_f-r)^2 }{2} dr \nonumber \\
        & \leq  M^2  t_f^2  \|u_1-u_2 \|_{\infty}  \frac{(t_f-t)^2 }{2} .
      \end{align}
An  induction argument shows that
\begin{align}
    |(F^nu_1)(t) - (F^nu_2)(t)|& \leq M^n t_f^n \frac{(t_f-t)^{n}}{n!}   \|u_1-u_2 \|_{\infty}.
\end{align}
Defining $a=(M t_f^2),$ it follows that
\begin{align}
    \|(F^nu_1)(t) - (F^nu_2)(t)\|_\infty& \leq   \frac{a^n}{n!}  \|u_1-u_2 \|_{\infty}.
\end{align}

For sufficiently large $n$, $\frac{a^n}{n!} <1$ and hence $F^n$
is  a contraction for large enough $n.$ It follows from  \cite[Lemma 5.4-3]{kreyszig1991introductory} that $F$ has a unique fixed point.  Hence, there exists a unique control  in $ C(0,t_f; \mathcal{U})$ that solves equation \eqref{F-eq}, and is also  the unique solution of equation \eqref{fixed-point-argument}.

The control $u^{opt}(t) \in C(0,t_f;\mathcal{U})$, and is  the unique control leading to $z_u=0$. Since equation \eqref{difference} was derived by allowing for the variation $h(t) \in \mathcal{V}_a$,  ensuring  that $(u+h)(t)$ satisfies the consistency condition, it follows that $u^{opt}(t)$ ensures the consistency of the initial condition, that is, $u^{opt}(t) \in \mathcal{U}_a$. 

Referring to  \eqref{difference}, since $R>0, $ $Q\ge0, $  $G \geq 0,$ for any admissible variation $h,$ 
\begin{align}
    J(x_i,u^{opt}+h;t_f) - J(x_i,u^{opt};t_f) > 0 ,
\end{align}
and so $u^{opt} (t)$ is the unique control minimizing $J.$
\end{proof}

 \section{  Derivation of differential Riccati equations} \label{section3}
Consider  the cost functional \eqref{cost} with a variable initial
time $t_0 $, $ 0 \leq t_0 \leq t_f$,
\begin{align}
J(x_i,u ; t_0, t_f) =& \langle x(t_f), G x(t_f)\rangle_{\mathcal{X}}   + \int_{t_0}^{t_f} \langle x(s), Q x(s)\rangle_{\mathcal{X}}  \nonumber \\
& + \langle u(s), R u(s)\rangle_{\mathcal{U}} ds . \label{cost2}
\end{align}
Since the governing equations are time-invariant, assuming  $u(t_0)$ is consistent with  $x(t_0)$, the previous section's results apply.  
Thus, there is unique input that minimizes the cost functional \eqref{cost2} for trajectories of \eqref{1} with initial condition $x(t_0) = x_i$.  The  control that minimizes  cost functional \eqref{cost2} is denoted $u^{opt}(\cdot; x_i, t_0, t_f)$, and its corresponding optimal state trajectory is  $x^{opt}(\cdot; x_i, t_0, t_f).$ The control that minimizes
\eqref{cost} shall be denoted by $ u^{opt}(t)$ or $u^{opt}(\cdot; x_i, 0, t_f)$, and its corresponding state trajectory is $x^{opt}(t)$ or  $x^{opt}(\cdot; x_i, 0, t_f)$. 

The following result holds due to the uniqueness of the optimal control. It is an extension of the principle of optimality from linear PDEs to linear PDAEs, and for each of the sub-states.

\begin{lem} \label{opt-principle} Let  $0 \leq t_0 \leq t_f$.   For all $ s \in [t_0, t_f]$, 
    \begin{align}
x^{opt}(s; x_i, 0, t_f) = x^{opt}(s; x^{opt}(t_0, x_i, 0, t_f), t_0, t_f). \label{equation001}
\end{align} 
In addition, each of the dynamical and the algebraic  sub-states satisfy the principle of optimality, that is, 
\begin{align*}
&x_{0}^{opt}(s; (x_i)_0, 0, t_f) = x_0^{opt}(s;  x^{opt}_0(t_0, (x_i)_0, 0, t_f), t_0, t_f),
\end{align*}
and
\begin{align}
&x_{1}^{opt}(s; (x_i)_1,0, t_f) = x_1^{opt}(s; x^{opt}_1(t_0, (x_i)_1, 0, t_f), t_0, t_f). \label{x-1-relation}
\end{align}
\end{lem}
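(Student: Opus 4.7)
Rather than the classical splice-and-contradict proof of the principle of optimality---which runs into trouble here because naive pasting of two admissible controls at $t_0$ need not preserve continuity on $[0,t_f]$ and hence need not lie in $\mathcal{U}_a$---my plan is to deduce the result directly from the uniqueness clause of Theorem \ref{fixed-point} applied to the shifted problem on $[t_0, t_f]$. Set $u^{\star}(\cdot) := u^{opt}(\cdot; x_i, 0, t_f)$ and $x^{\star}(\cdot) := x^{opt}(\cdot; x_i, 0, t_f)$. I will show that the restriction $u^{\star}|_{[t_0, t_f]}$ is the unique minimizer of the shifted cost \eqref{cost2} with initial state $x^{\star}(t_0)$; identity \eqref{equation001} then follows because trajectories generated by the same admissible input from the same initial state coincide.

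\textbf{Admissibility and the $z_u=0$ characterization.} First, $u^{\star}|_{[t_0,t_f]}\in C([t_0,t_f];\mathcal{U})$. Since $u^{\star}\in\mathcal{U}_a$, the algebraic identity \eqref{solutions-0-consistent} gives $x_0^{\star}(t)=-\tilde{B}_0 u^{\star}(t)$ on all of $[0,t_f]$; evaluating at $t=t_0$ verifies the consistency condition in the shifted admissible set. Next, using the $C_0$-semigroup concatenation identity $T(t-s)=T(t-t_0)T(t_0-s)$ for $s\le t_0\le t$ together with \eqref{solutions-x1}, the restriction $x_1^{\star}|_{[t_0,t_f]}$ is exactly the mild solution on $[t_0,t_f]$ of \eqref{a10} with initial condition $x_1^{\star}(t_0)$ driven by $u^{\star}|_{[t_0,t_f]}$. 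Consequently the function $z_u$ of \eqref{zu} computed for the shifted problem at $u=u^{\star}|_{[t_0,t_f]}$ coincides pointwise on $[t_0,t_f]$ with the $z_{u^{\star}}$ of the original problem, which vanishes identically by Theorem \ref{fixed-point}. The uniqueness of the fixed point of $F$ established in the proof of that theorem then forces
\begin{equation*}
u^{\star}\big|_{[t_0,t_f]} \;=\; u^{opt}(\cdot; x^{\star}(t_0), t_0, t_f),
\end{equation*}
and hence the corresponding trajectories agree on $[t_0,t_f]$, which is \eqref{equation001}.

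\textbf{Sub-states and main obstacle.} The two component identities follow immediately: applying $P^{\mathcal{X}_1}$ and $P^{\mathcal{X}_0}$ to \eqref{equation001} and using that, by \eqref{solutions-x1} and \eqref{solutions-0-consistent}, $x_1^{opt}$ depends only on $(x_i)_1$ and $u^{opt}$ while $x_0^{opt}$ depends only on $u^{opt}$, yields the stated equalities for each sub-state. The main obstacle---and the reason for bypassing the classical pasting argument---is the subtle admissibility issue: the continuity requirement built into $\mathcal{U}_a$ is not automatically preserved under splicing at $t_0$, since the consistency condition only pins down $\tilde{B}_0 u(t_0)$ and not $u(t_0)$ itself (unless $\tilde{B}_0$ is injective). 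Routing the argument through the fixed-point uniqueness from Theorem \ref{fixed-point} avoids this entirely; the remaining technicality is the semigroup concatenation identity used to recognize $x_1^{\star}|_{[t_0,t_f]}$ as a genuine mild solution of the shifted dynamical subsystem, but this is a standard $C_0$-semigroup computation.
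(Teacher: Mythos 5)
Your argument is correct, and it reaches the lemma by a more self-contained route than the paper. The paper establishes \eqref{equation001} by appealing to the uniqueness of the optimal control together with the classical argument of \cite[Lemma 9.1.7]{curtain2020introduction} (the splice-at-$t_0$ reasoning you deliberately avoid); it then proves the algebraic sub-state identity directly from \eqref{a11}, writing $x_0^{opt}$ in terms of $\tilde{B}_0 u^{opt}$ and using uniqueness of the optimal control on $[t_0,t_f]$, and finally obtains the dynamical identity \eqref{x-1-relation} by subtracting, i.e.\ comparing the decompositions \eqref{equation1} and \eqref{equation10}. You instead prove the key fact $u^{opt}(\cdot;x_i,0,t_f)\big|_{[t_0,t_f]}=u^{opt}(\cdot;x^{opt}(t_0),t_0,t_f)$ from first principles: you check admissibility of the restricted control for the shifted problem via \eqref{solutions-0-consistent}, identify $x_1^{opt}\big|_{[t_0,t_f]}$ as the shifted mild solution through the semigroup property, observe that the shifted $z_u$ of \eqref{zu} coincides with the restriction of the original one and hence vanishes, and invoke the fixed-point uniqueness from Theorem \ref{fixed-point}; the sub-state identities then drop out by applying $P^{\mathcal{X}_1}$ and $P^{\mathcal{X}_0}$ to \eqref{equation001}. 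What your route buys is an explicit treatment of the consistency/continuity issue that the splicing argument glosses over (only $\tilde{B}_0u(t_0)$, not $u(t_0)$, is pinned down by admissibility), and a proof that uses only machinery already established in the paper rather than an argument imported by analogy; what the paper's route buys is brevity, plus the observation---made explicit in its treatment of $x_0^{opt}$---that the algebraic sub-state inherits the optimality principle simply because it is slaved to the control. Both hinge on the same underlying fact, uniqueness of the optimal control for the shifted problem (legitimate here because the dynamics are time-invariant, as the paper notes at the start of Section \ref{section3}), so your proof can be viewed as a rigorous expansion of the step the paper cites.
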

\begin{proof}
Since the optimal control $u^{opt}(t)$ is unique,   we can use a similar line of reasoning as the one presented in  \cite[Lemma 9.1.7]{curtain2020introduction} to  establish equation \eqref{equation001}  for all $ s \in [t_0, t_f]$.  Using once more the uniqueness of the optimal control and also equation \eqref{a11}, we have for any $ s \in [t_0, t_f]$ 
\begin{align}
     x_0^{opt}(s; (x_i)_0, 0, t_f)  &= \Tilde{B}_0 u^{opt}(s; x_i, 0, t_f) \nonumber \\
    & = \Tilde{B}_0 u^{opt}(s; x^{opt}(t_0, x_i, 0, t_f), t_0, t_f) \nonumber \\
    &= x_0^{opt}(s; x^{opt}_0(t_0, (x_i)_0, 0, t_f), t_0, t_f) .\label{equation2}
\end{align}
 Thus, the algebraic sub-state of the PDAE  conforms to the principle of optimality. The trajectory of system \eqref{1}  on $[0,t_f]$ with initial condition $x_i$ is  
    \begin{align}
x^{opt}(s; x_i, 0, t_f)&  = x_1^{opt}(s; (x_i)_1, 0, t_f) + x_0^{opt}(s; (x_i)_0, 0, t_f) , \label{equation1} 
\end{align}
and the trajectory   on $[t_0,t_f]$ with initial condition $x^{opt}(t_0, x_i, 0, t_f)$ is  
\begin{align}
x^{opt}(s; x^{opt}(t_0, x_i, 0, t_f) , t_0, t_f)&  = x_1^{opt}(s; x_1^{opt}(t_0, (x_i)_1, 0, t_f), t_0, t_f) \nonumber \\
& + x_0^{opt}(s; x^{opt}_0(t_0, (x_i)_0, 0, t_f), t_0, t_f). \label{equation10} 
\end{align}    
Using \eqref{equation001}, we deduce that the right-hand-side of equations \eqref{equation1} and \eqref{equation10} are equal. It then follows from \eqref{equation2} that the dynamical sub-state of the PDAE, i.e. $x^{opt}_1$,  also  satisfies the principle of optimality \eqref{x-1-relation}.
\end{proof}
The next proposition demonstrates that at any given time $t \in [0,t_f]$, the optimal control \eqref{optimal control-feedback} can be written as a feedback of the dynamical state $x^{opt}_1$.

\begin{prop}   The optimization problem \eqref{optimal-control-problem} reduces to   minimizing  the cost functional
\begin{align}
    J((x_i)_1,u ; t_f) =& \langle x_1 (t_f), G_1 x_1(t_f)\rangle_{\mathcal{X}}   + \int_{0}^{t_f} \langle x_1(s), Q_1 x_1(s)\rangle_{\mathcal{X}}  \nonumber \\
    &+ \langle u(s), \Tilde{R} u(s)\rangle_{\mathcal{U}} ds , \label{cost-dynamical}
\end{align}
over the set of admissible control $\mathcal{U}_a$, where $x_1(t)$  solves  system \eqref{a10}. 

The minimizing optimal control $u^{opt}(t)$ can be rewritten as 
      \begin{align}
        u^{opt}(t) =&  - \Tilde{R}^{-1} \tilde{B}_1^*\big[   T^*(t_f-t)  G_1 x^{opt}_1 (t_f) + \int_t^{t_f}  T^*(r-t)  Q_1 x^{opt}_1(r) dr \big]. \label{optimal control-dynamical state}
    \end{align}
\end{prop}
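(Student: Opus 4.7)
My plan is to derive the reduced cost functional \eqref{cost-dynamical} by explicitly substituting the algebraic sub-state into the original cost \eqref{cost}, and then rearrange the feedback expression \eqref{optimal control-feedback} using the same substitution to absorb the $x_0^{opt}$ term into the control weight.

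First I would write $x(t) = x_1(t) + x_0(t)$ in the cost \eqref{cost} and exploit the block structure of $G$ and $Q$. Assumption \eqref{assumption1} forces $\langle x(t_f), G x(t_f)\rangle_{\mathcal{X}} = \langle x_1(t_f), G_1 x_1(t_f)\rangle_{\mathcal{X}}$, and assumption \eqref{assumption2} makes the cross-terms $\langle x_1, Q x_0\rangle_{\mathcal{X}}$ and $\langle x_0, Q x_1\rangle_{\mathcal{X}}$ vanish, so that $\langle x(s), Qx(s)\rangle_{\mathcal{X}} = \langle x_1(s), Q_1 x_1(s)\rangle_{\mathcal{X}} + \langle x_0(s), Q_0 x_0(s)\rangle_{\mathcal{X}}$. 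Because $u \in \mathcal{U}_a$, the consistency relation together with \eqref{solutions-0-consistent} gives $x_0(s) = -\tilde{B}_0 u(s)$ for all $s \in [0,t_f]$, so that $\langle x_0(s), Q_0 x_0(s)\rangle_{\mathcal{X}} = \langle u(s), \tilde{B}_0^* Q_0 \tilde{B}_0 u(s)\rangle_{\mathcal{U}}$. Combining this with the running term $\langle u, Ru\rangle_{\mathcal{U}}$ yields the weight $\tilde{R} = R + \tilde{B}_0^* Q_0 \tilde{B}_0$ and produces exactly \eqref{cost-dynamical}, with $x_1(t)$ still governed by \eqref{a10}. Since the minimization over $\mathcal{U}_a$ for this reduced cost sees only the same admissible set and dynamics, Theorem \ref{fixed-point} guarantees that the minimizer coincides with $u^{opt}$.

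For the feedback form \eqref{optimal control-dynamical state}, I would start from \eqref{optimal control-feedback} and again substitute $x_0^{opt}(t) = -\tilde{B}_0 u^{opt}(t)$. This moves a term $\tilde{B}_0^* Q_0 \tilde{B}_0 u^{opt}(t)$ to the left-hand side, giving
\begin{align*}
(R + \tilde{B}_0^* Q_0 \tilde{B}_0) u^{opt}(t) = -\tilde{B}_1^*\Big[ T^*(t_f-t) G_1 x_1^{opt}(t_f) + \int_t^{t_f} T^*(r-t) Q_1 x_1^{opt}(r)\, dr \Big].
\end{align*}
Since $\tilde{R}$ is coercive (as already observed following \eqref{input-new}), multiplying by $\tilde{R}^{-1}$ gives \eqref{optimal control-dynamical state}.

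No step here looks particularly delicate: the algebra is essentially forced by assumptions \eqref{assumption1}–\eqref{assumption2} and the elimination $x_0 = -\tilde{B}_0 u$. The only point that needs care is ensuring that reducing to the cost \eqref{cost-dynamical} does not change the minimizer, which follows because the admissible set $\mathcal{U}_a$ and the dynamics of $x_1$ are unchanged and the two cost functionals differ only through terms that are functions of $u$ already accounted for in $\tilde{R}$.
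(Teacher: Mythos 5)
Your proposal is correct and follows essentially the same route as the paper: decompose the cost via assumptions \eqref{assumption1}--\eqref{assumption2}, eliminate the algebraic state through $x_0 = -\tilde{B}_0 u$ on $\mathcal{U}_a$ to absorb $\tilde{B}_0^* Q_0 \tilde{B}_0$ into the control weight $\tilde{R}$, and then rearrange \eqref{optimal control-feedback} and invert the coercive $\tilde{R}$ to get \eqref{optimal control-dynamical state}. Your observation that the substitution holds for every admissible control (not only the optimal one), so the two costs agree on all of $\mathcal{U}_a$ and the minimizer is unchanged, is exactly the point the paper relies on, stated slightly more carefully.
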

\begin{proof}   Rewrite  the cost functional \eqref{cost} as
  \begin{align}
    J(x_i,u ; t_f) =& \langle x_1 (t_f), G_1 x_1(t_f)\rangle_{\mathcal{X}}   + \int_{0}^{t_f} \langle x_1(s), Q_1 x_1(s)\rangle_{\mathcal{X}}  + \langle x_0(s), Q_0 x_0(s)\rangle_{\mathcal{X}}  \nonumber \\
& + \langle u(s), R u(s)\rangle_{\mathcal{U}} ds .
\end{align}
Since the optimization is over $\mathcal{U}_a$, use that $x_0^{opt}(t) = - \Tilde{B}_0 u^{opt}(t)$ to obtain the cost \eqref{cost-dynamical}. Note that the  existence of unique optimizing control for cost functional  \eqref{cost-dynamical} over the set $\mathcal{U}_a$ follows from the results in section\ref {section2} concerning the equivalent optimization problem \eqref{optimal control-feedback}. To prove statement \eqref{optimal control-dynamical state}, we first recall that $\Tilde{R}=R+ \Tilde{B}^*_0  Q_0  \Tilde{B}_0 $. From \eqref{optimal control-feedback}, it follows that
\begin{align*}
    u^{opt}(t) =& -R^{-1}  \Big[ \tilde{B}_1^* T^*(t_f-t)  G_1 x^{opt}_1 (t_f) + \tilde{B}_1^* \int_t^{t_f}  T^*(r-t)  Q_1 x^{opt}_1(r) dr \nonumber\\
 &+ \tilde{B}^*_0 Q_0 \Tilde{B}_0 u^{opt}(t)\Big].
\end{align*}
 Now the previous equation can be rewritten as
\begin{align*}
    (R+\tilde{B}^*_0 Q_0 \Tilde{B}_0)  u^{opt}(t) =& -  \tilde{B}_1^* \Big[ T^*(t_f-t)  G_1 x^{opt}_1 (t_f) + \int_t^{t_f}  T^*(r-t)  Q_1 x^{opt}_1(r) dr \Big].
\end{align*}
 Since the operator  $\Tilde{R}=R+ \Tilde{B}^*_0  Q_0  \Tilde{B}_0$ is  coercive, we arrive at equation \eqref{optimal control-dynamical state}. 
\end{proof}
The optimization problem \eqref{optimal-control-problem} simplifies to a standard LQ problem on $\mathcal{X}_1$. This simplification allows us to apply well-known results on the finite-time LQ problem; see  \cite[Chapter 9]{curtain2020introduction}. 

\begin{lem} \cite[Lemma 9.1.9]{curtain2020introduction} \label{projections}  For any $t \in [0,t_f]$ and any $x_1 \in \mathcal{X}_1$,   define the operator  $\Pi_1(t)$ on $\mathcal{X}_1$  
\begin{align}  
    \Pi_1(t)  x_1 & =       T^*(t_f-t)  G_1 x_1^{opt}(t_f; x_1, t, t_f)  \nonumber\\
 & +  \int_{t}^{t_f}  T^*(r-t)    Q_1 x_1^{opt}(r; x_1 , t, t_f) dr ; \label{pi1} 
\end{align}
 $ \Pi_1(t) \in \mathcal{L}(\mathcal{X}_1) $ for all $t\in [0,t_f]$.   The optimal control \eqref{optimal control-dynamical state} can be written as 
        \begin{align}
   u^{opt}(t; x_i, 0, t_f) 
   =&-\Tilde{R}^{-1}  \Tilde{B}_1^* \Pi_1(t)    x_1^{opt} (t; (x_i)_1, 0, t_f)  , \label{optimal-control-x1-pi1}
\end{align} 
and the minimum cost is
 \begin{align}
    &  J(x_i,u^{opt} ; t_f)  =   \langle  (x_i)_1 ,   \Pi_1(0)  (x_i)_1\rangle _{\mathcal{X}}  .
\label{min-cost-E}
\end{align}
\end{lem}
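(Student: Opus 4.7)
My approach is to establish the three claims in order, leveraging the principle of optimality from Lemma \ref{opt-principle} together with the mild-solution representation of $x_1^{opt}$.

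\textbf{Boundedness of $\Pi_1(t)$.} I would first argue that for each fixed $t\in[0,t_f]$, the map $x_1 \mapsto x_1^{opt}(\cdot; x_1, t, t_f)$ is linear and continuous from $\mathcal{X}_1$ into $C([t,t_f];\mathcal{X}_1)$. Linearity is immediate from the linearity of the reduced dynamics \eqref{a10} together with the linearity of the feedback established in Theorem \ref{fixed-point} applied on $[t,t_f]$. Continuity follows from the contraction argument in the proof of Theorem \ref{fixed-point}: the unique fixed point of $F$ depends continuously on the initial data, so $\|x_1^{opt}(\cdot; x_1, t, t_f)\|_\infty \leq C \|x_1\|_{\mathcal{X}_1}$ for a constant $C$ depending on $t_f$, $\|T(\cdot)\|$, $\|G_1\|$, $\|Q_1\|$, $\|\tilde{B}_1\|$, $\|\tilde{R}^{-1}\|$. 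Combined with the boundedness of $T^*(\cdot)$, $G_1$, $Q_1$ on compact intervals, this yields $\Pi_1(t)\in\mathcal{L}(\mathcal{X}_1)$ with a norm bound uniform in $t\in[0,t_f]$.

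\textbf{Feedback form.} Starting from \eqref{optimal control-dynamical state}, I would apply the sub-state principle of optimality \eqref{x-1-relation} with $t_0$ replaced by the current time $t$: for every $r\in[t,t_f]$,
\begin{equation*}
x_1^{opt}(r; (x_i)_1, 0, t_f) = x_1^{opt}\bigl(r;\, x_1^{opt}(t; (x_i)_1, 0, t_f),\, t, t_f\bigr),
\end{equation*}
and in particular at $r=t_f$. Substituting these two identities into \eqref{optimal control-dynamical state} turns the bracket into precisely the definition of $\Pi_1(t)$ applied to $x_1^{opt}(t; (x_i)_1, 0, t_f)$, which yields \eqref{optimal-control-x1-pi1}.

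\textbf{Minimum cost.} For this I would start from the reduced cost \eqref{cost-dynamical} evaluated at $u^{opt}$ and rewrite the control-penalty term using the relation
\begin{equation*}
\tilde{R}u^{opt}(s) = -\tilde{B}_1^*\Bigl[T^*(t_f-s)G_1 x_1^{opt}(t_f) + \int_s^{t_f} T^*(r-s) Q_1 x_1^{opt}(r)\,dr\Bigr],
\end{equation*}
which comes from \eqref{optimal control-dynamical state}. Moving $\tilde{B}_1^*$ across the pairing, swapping the order of integration via Fubini on the resulting double integral, and recognising $\int_0^r T(r-s)\tilde{B}_1 u^{opt}(s)\,ds = x_1^{opt}(r) - T(r)(x_i)_1$ from the mild solution \eqref{solutions-x1} of \eqref{a10}, the terms $\langle x_1^{opt}(t_f), G_1 x_1^{opt}(t_f)\rangle$ and $\int_0^{t_f}\langle x_1^{opt}(s), Q_1 x_1^{opt}(s)\rangle\,ds$ cancel against their counterparts, leaving
\begin{equation*}
J(x_i, u^{opt}; t_f) = \bigl\langle (x_i)_1, T^*(t_f)G_1 x_1^{opt}(t_f)\bigr\rangle_{\mathcal{X}} + \int_0^{t_f}\bigl\langle (x_i)_1, T^*(r) Q_1 x_1^{opt}(r)\bigr\rangle_{\mathcal{X}}\,dr,
\end{equation*}
which is exactly $\langle (x_i)_1, \Pi_1(0)(x_i)_1\rangle_{\mathcal{X}}$ by the defining formula \eqref{pi1} with $t=0$.

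\textbf{Main obstacle.} The only subtle point I anticipate is justifying the Fubini interchange and the bookkeeping in the cost computation in an infinite-dimensional setting, but the continuity of $u^{opt}$ and $x_1^{opt}$ established in Theorem \ref{fixed-point}, combined with the fact that $\|T(\cdot)\|$ and $\|T^*(\cdot)\|$ are bounded on $[0,t_f]$, makes every integrand dominated and integrable, so the interchange is routine. The feedback form itself is essentially a bookkeeping consequence of the principle of optimality already proved in Lemma \ref{opt-principle}.
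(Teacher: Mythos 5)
Your proposal is correct, but it is worth pointing out that the paper itself does not prove this lemma at all: having reduced the problem to a standard LQ problem on $\mathcal{X}_1$ with cost \eqref{cost-dynamical} (weights $G_1$, $Q_1$, $\Tilde{R}$), the paper simply cites \cite[Lemma 9.1.9]{curtain2020introduction}. What you have done is reconstruct, in the notation of this paper, essentially the textbook argument behind that citation: (i) boundedness of $\Pi_1(t)$ from linearity and continuity of the data-to-optimal-trajectory map, (ii) the feedback form \eqref{optimal-control-x1-pi1} from the sub-state principle of optimality \eqref{x-1-relation} applied at $t_0=t$ and $r=t_f$, and (iii) the cost identity \eqref{min-cost-E} by substituting $\Tilde{R}u^{opt}(s)=-\tilde{B}_1^*[\,\cdot\,]$ from \eqref{optimal control-dynamical state}, applying Fubini, and using the mild-solution identity $\int_0^r T(r-s)\tilde{B}_1u^{opt}(s)\,ds = x_1^{opt}(r)-T(r)(x_i)_1$ so that the $G_1$ and $Q_1$ terms cancel. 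All three steps check out; the Fubini interchange is indeed harmless since the integrands are continuous on a compact triangle and $\|T(\cdot)\|$, $\|T^*(\cdot)\|$ are bounded on $[0,t_f]$. The only place where you wave your hands slightly is the continuity claim in step (i): "the unique fixed point of $F$ depends continuously on the initial data" deserves one line, namely that $F$ is affine, $Fu=\mathcal{A}u+b$ with $\mathcal{A}$ linear, $b$ linear and bounded in $x_1$, and the estimate $\|\mathcal{A}^n\|\le a^n/n!$ from the contraction argument gives $u^{opt}=\sum_{n\ge 0}\mathcal{A}^n b$, hence linearity and a bound $\|u^{opt}\|_\infty\le C\|x_1\|_{\mathcal{X}_1}$, from which $\|x_1^{opt}\|_\infty\le C'\|x_1\|_{\mathcal{X}_1}$ follows via \eqref{solutions-x1}. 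With that line added, your argument is a complete, self-contained proof of the cited lemma; what the paper's route buys instead is brevity, at the price of leaving the adaptation to the reader.
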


 The  optimal dynamical state $x^{opt}_1(t)$ is   the mild solution to an abstract evolution equation. This is stated in the corollary below.
\begin{cor} \label{mild evolution operator} 
The optimal sub-state $x^{opt}_1(t)$  is the mild solution of the abstract evolution equation
\begin{equation}
    \begin{aligned}
    \frac{d}{dt}  x^{opt}_1(t)   &=  (E_1^{-1} A_1 - \tilde{B}_1\Tilde{R}^{-1}  \tilde{B}_1^* \Pi_1(t)    ) x^{opt}_1 (t) ,  \;   \\
     x_1^{opt} (0) &=   (x_i)_1 . 
\end{aligned} \label{evolution-eq-x1}
\end{equation}
Also, the operator $ E_1^{-1} A_1 - \tilde{B}_1\Tilde{R}^{-1}  \Tilde{B}_1^* \Pi_1(t)   $    generates the mild evolution operator $U(t, s)$ on the set $\{(t, s); 0 \leq s \leq t \leq t_f\}$, so 
\begin{align}
    x^{opt}_1(t;(x_i)_1, t_0 , t_f) = U(t,t_0) (x_i)_1.  \label{solutin-x1-opt}
\end{align}
\end{cor}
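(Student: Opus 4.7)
The plan is to substitute the feedback representation of $u^{opt}$ given in Lemma~\ref{projections} into the mild solution formula \eqref{solutions-x1} for the dynamical sub-state, read off the resulting integral equation as the definition of a mild solution of the claimed closed-loop equation, and then invoke standard bounded-perturbation theory for $C_0$-semigroups to produce the mild evolution operator $U(t,s)$.

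More concretely, I would first note that on the interval $[0,t_f]$ the optimal dynamical sub-state satisfies
\begin{align*}
x_1^{opt}(t) = T(t)(x_i)_1 + \int_0^t T(t-s)\tilde{B}_1 u^{opt}(s)\,ds,
\end{align*}
since $u^{opt}(t)\in\mathcal{U}_a$ and \eqref{solutions-x1} holds for any continuous input. Substituting the feedback form \eqref{optimal-control-x1-pi1}, $u^{opt}(s)=-\tilde{R}^{-1}\tilde{B}_1^*\Pi_1(s)\,x_1^{opt}(s)$, yields
\begin{align*}
x_1^{opt}(t) = T(t)(x_i)_1 - \int_0^t T(t-s)\,\tilde{B}_1\tilde{R}^{-1}\tilde{B}_1^*\Pi_1(s)\,x_1^{opt}(s)\,ds,
\end{align*}
which is precisely the definition of a mild solution, on $\mathcal{X}_1$, of the abstract Cauchy problem \eqref{evolution-eq-x1} with generator $\tilde{A}_1=E_1^{-1}A_1$ perturbed by the bounded time-varying feedback operator $K(t):=\tilde{B}_1\tilde{R}^{-1}\tilde{B}_1^*\Pi_1(t)\in\mathcal{L}(\mathcal{X}_1)$.

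For the second claim I would invoke standard perturbation theory for evolution operators (e.g.\ \cite[Theorem~3.1.2]{curtain2020introduction}, or Pazy's results on bounded time-varying perturbations). Since $\tilde{A}_1$ generates the $C_0$-semigroup $T(t)$ by Assumption~2, once $K(\cdot)$ is shown to be a strongly continuous $\mathcal{L}(\mathcal{X}_1)$-valued function on $[0,t_f]$, the perturbed family $\tilde{A}_1-K(t)$ generates a unique mild evolution operator $U(t,s)$ on $\{(t,s):0\leq s\leq t\leq t_f\}$ satisfying
\begin{align*}
U(t,s)x = T(t-s)x - \int_s^t T(t-r)\,K(r)\,U(r,s)x\,dr,\qquad x\in\mathcal{X}_1.
\end{align*}
Applying this at $s=t_0$ to $x=(x_i)_1$ and comparing with the integral equation derived in the previous paragraph, the uniqueness of mild solutions gives \eqref{solutin-x1-opt}.

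The main obstacle, and the only non-routine verification, is the strong continuity of $t\mapsto\Pi_1(t)$ on $[0,t_f]$, which is needed to apply the perturbation result. This I would establish directly from the defining formula \eqref{pi1}: strong continuity of $T(t)$ and $T^*(t)$ on bounded intervals (Assumption~2), boundedness of $G_1$ and $Q_1$, together with the continuous dependence of $x_1^{opt}(\,\cdot\,;x_1,t,t_f)$ on the initial pair $(x_1,t)$ (which follows from the fixed-point construction in Theorem~\ref{fixed-point} applied on the shrinking interval $[t,t_f]$), allow one to pass to the limit under the integral and semigroup, giving strong continuity of $\Pi_1(t)x_1$ in $t$ for each $x_1\in\mathcal{X}_1$. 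Once this is in hand, $K(t)$ inherits the same regularity and the two assertions of the corollary follow.
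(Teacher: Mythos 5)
Your proposal is correct, and at bottom it follows the same route as the paper: reduce to the standard finite-horizon LQ problem on $\mathcal{X}_1$ with weights $G_1$, $Q_1$, $\tilde{R}$ and then represent the optimal trajectory as the mild solution of the closed-loop equation. The difference is one of economy: the paper's proof is a one-line appeal to \cite[Corollary 9.1.10]{curtain2020introduction}, whereas you in effect reprove that textbook result---substituting the feedback form \eqref{optimal-control-x1-pi1} into \eqref{solutions-x1} to get the Volterra integral equation, and then invoking bounded time-varying perturbation theory to obtain the mild evolution operator $U(t,s)$ and conclude \eqref{solutin-x1-opt} by uniqueness. Your reconstruction is sound; the one step you rightly flag as non-routine, the strong continuity (and uniform boundedness on $[0,t_f]$) of $t\mapsto \Pi_1(t)$, is exactly what the cited Curtain--Zwart machinery supplies (and what the paper later records in Lemma \ref{lemma-self-adjoint-pi1}), while your direct verification via continuous dependence of $x_1^{opt}(\cdot\,;x_1,t,t_f)$ on $(x_1,t)$ from the fixed-point construction is plausible but is the sketchiest part of your argument and would need to be written out if you do not want to fall back on the textbook citation.
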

\begin{proof}
    This result follows directly by using  the expression of the optimal control in \eqref{optimal control-dynamical state}, and applying the results in \cite[Corollary 9.1.10]{curtain2020introduction}.
\end{proof}
As a matter of fact, the operator-valued function  $ \Pi_1(t)$ is the unique solution of a standard differential Riccati equation.
\begin{lem} \label{lemma-self-adjoint-pi1}\cite[Lemma 4.3.2,Theorem 9.1.11]{curtain2020introduction} The operator-valued function $ \Pi_1(t)$ solves the following differential Riccati equation
\begin{align}
     \frac{d}{dt}    \Pi_1(t) x_1  +  \Pi_1(t)   E_1^{-1} A_1 x_1 +  A_1^* E_1^{-*}\Pi_1(t)   x_1& \nonumber \\
     -  \Pi_1(t)   \Tilde{B}_1   \Tilde{R}^{-1} \tilde{B}_1^* \Pi_1(t)  x_1  +     Q_1x_1 &=0 , \qquad \forall x_1 \in D(A_1) , \label{standard DRE} \\
 \Tilde{\Pi}_1(t_f) x_1 &= G_1x_1   . \label{final-pi1}
\end{align} 
The  operator-valued function $\Pi_1(t)$ is the unique solution of this equation in the
class of strongly continuous, self-adjoint operators in $\mathcal{L}(\mathcal{X}_1)$ such that $\langle  x_1^a,  \Pi_1(t) x^b_1 \rangle_{\mathcal{X}} $ is differentiable for $t \in (0,t_f) $ and $ x_1^a, \;  x_1^b \in D(A_1)$. 
\end{lem}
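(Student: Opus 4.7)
The plan is to observe that, once the algebraic sub-state has been eliminated via $x_0 = -\tilde{B}_0 u$ and the cost rewritten as in \eqref{cost-dynamical}, the minimization reduces to a textbook infinite-dimensional LQR problem on $\mathcal{X}_1$: the generator $\tilde{A}_1 = E_1^{-1} A_1$ of the $C_0$-semigroup $T(t)$ (Assumption 2), the bounded input operator $\tilde{B}_1 = E_1^{-1} B_1 \in \mathcal{L}(\mathcal{U},\mathcal{X}_1)$, bounded self-adjoint non-negative weights $Q_1, G_1$ on $\mathcal{X}_1$, and a coercive self-adjoint control penalty $\tilde{R} = R + \tilde{B}_0^* Q_0 \tilde{B}_0$. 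With these identifications the assertion is exactly the statement of Curtain--Zwart, Theorem 9.1.11, and the bulk of the work is verification of hypotheses rather than new analysis.

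First I would verify that $\Pi_1(t)$ as defined in \eqref{pi1} is the value-function operator for the reduced LQ problem with variable start time: evaluating the cost \eqref{cost-dynamical} along the optimal trajectory starting at time $t$ from state $x_1$, using the feedback formula \eqref{optimal-control-x1-pi1}, gives $J = \langle x_1, \Pi_1(t) x_1\rangle_{\mathcal{X}}$, which is the content of Lemma \ref{projections}. Self-adjointness of $\Pi_1(t)$ then follows by polarization from the scalar cost functional, and strong continuity in $t$ is obtained by dominated convergence on the integral representation \eqref{pi1}, using strong continuity of $T(\cdot)$ and $T^*(\cdot)$ together with continuity in $t$ of the mild evolution operator $U(\cdot,t)$ supplied by Corollary \ref{mild evolution operator}. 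Curtain--Zwart Lemma 4.3.2 can be cited here to package these claims.

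To obtain the differential Riccati equation, I would use Corollary \ref{mild evolution operator} to write
\begin{equation*}
\Pi_1(t) x_1 = U(t_f,t)^* G_1 U(t_f,t) x_1 + \int_t^{t_f} U(r,t)^* \bigl( Q_1 + \Pi_1(r) \tilde{B}_1 \tilde{R}^{-1} \tilde{B}_1^* \Pi_1(r) \bigr) U(r,t) x_1 \, dr,
\end{equation*}
which expresses $\Pi_1(t)$ purely in terms of the closed-loop evolution and the non-negative Lyapunov-type integrand. Differentiating $t \mapsto \langle x_1^a, \Pi_1(t) x_1^b\rangle_{\mathcal{X}}$ for $x_1^a, x_1^b \in D(A_1)$, using the generator property of $\tilde{A}_1 - \tilde{B}_1 \tilde{R}^{-1} \tilde{B}_1^* \Pi_1(t)$ and collecting the feedback terms, yields \eqref{standard DRE}. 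The terminal condition \eqref{final-pi1} is immediate from \eqref{pi1} at $t = t_f$, where the integral vanishes and $T^*(0) = I$.

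For uniqueness within the stated class, I would suppose $\Pi^{(1)}(t)$ and $\Pi^{(2)}(t)$ both solve \eqref{standard DRE}--\eqref{final-pi1} with the stated regularity; forming the difference and testing against $x_1^a, x_1^b \in D(A_1)$, the quadratic feedback term can be linearized by writing $\Pi^{(1)} K \Pi^{(1)} - \Pi^{(2)} K \Pi^{(2)} = (\Pi^{(1)} - \Pi^{(2)}) K \Pi^{(1)} + \Pi^{(2)} K (\Pi^{(1)} - \Pi^{(2)})$ with $K = \tilde{B}_1 \tilde{R}^{-1} \tilde{B}_1^*$, producing a linear differential equation for $\Pi^{(1)} - \Pi^{(2)}$ with zero terminal value; a Gronwall estimate forces equality. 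The main obstacle I anticipate is bookkeeping the unboundedness of $A_1$ when manipulating the formal derivative: the test vectors must lie in $D(A_1)$, and the Riccati identity holds only weakly in that sense, exactly as reflected in the statement. Once this is handled correctly, the rest is a direct transcription of the standard LQR theorem to the reduced problem on $\mathcal{X}_1$.
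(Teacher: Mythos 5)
Your proposal is correct and follows essentially the same route as the paper: the paper proves nothing new here but simply invokes Curtain--Zwart (Lemma 4.3.2, Theorem 9.1.11) after the preceding proposition has reduced the problem to a standard LQ problem on $\mathcal{X}_1$ with generator $\tilde{A}_1$, bounded input $\tilde{B}_1$, weights $Q_1$, $G_1$ and coercive penalty $\tilde{R}$, which is exactly the identification you make. Your additional material (closed-loop integral representation, weak differentiation on $D(A_1)$, linearization plus Gronwall for uniqueness) is a faithful sketch of the cited textbook proof rather than a different argument.
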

This leads to the main result of this section: a characterization of  the optimal control \eqref{optimal control-feedback} that does not require calculating the restrictions of operators  $A, E $ on the subspace $\mathcal{X}_1$.



\begin{theorem} \label{thm-general-p} Define the operator
\begin{align}
    \Pi_0   &=   -  A_0^{-*} Q_0 ; \label{pi0}  
\end{align}
$\Pi_0 \in \mathcal{L}(\mathcal{X}_0, \mathcal{Z}_0) $. Also, recalling  the operator-valued function \eqref{pi1}, define 
\begin{subequations} \label{tilde}
    \begin{align}
   &\Tilde{\Pi}_1(t)=  P^{\mathcal{Z}_1*} E_1^{-*} \Pi_1(t) E_1^{-1} P^{\mathcal{Z}_1}  , \label{pi tilde 1}\\
    &  \Tilde{\Pi}_0=  P^{\mathcal{Z}_0*}  \Pi_0 P^{\mathcal{X}_0}  .\label{pi tilde 0}
\end{align}
\end{subequations}

\begin{enumerate}
    \item The solution of the optimization problem \eqref{optimal-control-problem} is 
    \begin{align}
   u^{opt}(t) 
   =&-R^{-1} B^* (\Tilde{\Pi}_0 + \Tilde{\Pi}_1(t) E)  x^{opt}(t). \label{optimal-control-riccati-no-projection}
\end{align} 
\item  The operator   $\Tilde{\Pi}_0$ solves the algebraic equation
\begin{align}
      A^*  \Tilde{\Pi}_0 x &=- Q P^{\mathcal{X}_0} x  , \quad \forall x\in \mathcal{X},
\label{equation-pi0-no-projection} 
\end{align}
and is uniquely defined on $\mathcal X_0 .$
\item
In addition,    $ \Tilde{\Pi}_1(t) \in  C ([0,t_f]; \mathcal{L}(\mathcal{Z}))$   solves  
    \begin{subequations} \label{riccati-eqs-no-projection}
\begin{align}
 \frac{d}{dt}  E^*  \Tilde{\Pi}_1(t)  E x  + E^*  \Tilde{\Pi}_1(t)   Ax +   A^* \Tilde{\Pi}_1(t)   E x& - E^* \Tilde{\Pi}_1(t)    B   R^{-1} B^* \Tilde{\Pi}_1(t)   E x  \nonumber \\
   -E^* \Tilde{\Pi}_1(t)    B   R^{-1} B^*  \Tilde{\Pi}_0x +     Q P^{\mathcal{X}_1} x &=0 , \qquad \forall x \in D(A) , \label{equation-pi1-no-projection} \\
    E^* \Tilde{\Pi}_1 (t_f) E x&= G x   . \label{final-pi1-no-projection} 
\end{align}  
\end{subequations} 
such that $\langle  Ex^a,  \Tilde{\Pi}_1(t) E x^b \rangle_{\mathcal{Z}} $ is differentiable for $t \in (0,t_f) $ and $x^a,x^b \in D(A).$ 
\item  The minimum cost is
 \begin{align}
    &  J(x_i,u^{opt} ; t_f)  =   \langle  Ex_i ,   \Tilde{\Pi}_1(0) Ex_i\rangle _{\mathcal{Z}}  . 
\label{min-cost-no-projection}
\end{align}
 \end{enumerate}
\end{theorem}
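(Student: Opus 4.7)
The argument transforms the classical finite-horizon LQ results on $\mathcal{X}_1$ — namely the DRE for $\Pi_1(t)$ of Lemma \ref{lemma-self-adjoint-pi1} and the feedback formula of Lemma \ref{projections} — into projection-free statements on the ambient spaces $\mathcal{X}$ and $\mathcal{Z}$, by repeatedly applying the commutation rules from Assumptions~1 and~2 and the definitions of $\tilde{\Pi}_1(t)$ and $\tilde{\Pi}_0$. The plan is to first establish a short list of ``translation'' identities and then reduce each of the four statements (1)--(4) to a routine rearrangement.

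I would begin by verifying the identities
\begin{align*}
\tilde{B}_1^* \Pi_1(t) P^{\mathcal{X}_1} &= B^* \tilde{\Pi}_1(t) E , &
E^* \tilde{\Pi}_1(t) E &= P^{\mathcal{X}_1*} \Pi_1(t) P^{\mathcal{X}_1} , \\
E^* \tilde{\Pi}_1(t) B &= P^{\mathcal{X}_1*} \Pi_1(t) \tilde{B}_1 , &
B^* \tilde{\Pi}_0 &= -\tilde{B}_0^* Q_0 P^{\mathcal{X}_0} ,
\end{align*}
together with $E^* \tilde{\Pi}_1(t) A x = P^{\mathcal{X}_1*} \Pi_1(t) \tilde{A}_1 P^{\mathcal{X}_1} x$ and $A^* \tilde{\Pi}_1(t) E x = P^{\mathcal{X}_1*} \tilde{A}_1^* \Pi_1(t) P^{\mathcal{X}_1} x$ for $x \in D(A)$. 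These follow by expanding the definitions \eqref{tilde}, using $P^{\mathcal{Z}_1} E = E P^{\mathcal{X}_1}$ and its adjoint in \eqref{relations-adjoint}, together with the pairing $\langle E_1^{-*} y, E_1 P^{\mathcal{X}_1} x\rangle_{\mathcal{Z}} = \langle y, P^{\mathcal{X}_1} x\rangle_{\mathcal{X}}$ valid for $y \in \mathcal{X}_1$.

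For part~(1), I start from $u^{opt}(t) = -\tilde{R}^{-1} \tilde{B}_1^* \Pi_1(t) x_1^{opt}(t)$ of \eqref{optimal-control-x1-pi1} and apply the first translation identity to obtain $\tilde{R} u^{opt}(t) = -B^* \tilde{\Pi}_1(t) E x^{opt}(t)$. Expanding $\tilde{R} = R + \tilde{B}_0^* Q_0 \tilde{B}_0$, using the admissibility relation $\tilde{B}_0 u^{opt}(t) = -x_0^{opt}(t) = -P^{\mathcal{X}_0} x^{opt}(t)$, and then the fourth translation identity, I obtain $R u^{opt}(t) = -B^*\bigl(\tilde{\Pi}_0 + \tilde{\Pi}_1(t) E\bigr) x^{opt}(t)$, which is \eqref{optimal-control-riccati-no-projection}. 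For part~(2), I apply $A^*$ to $\tilde{\Pi}_0 x = -P^{\mathcal{Z}_0*} A_0^{-*} Q_0 P^{\mathcal{X}_0} x$; the adjoint commutation $A^* P^{\mathcal{Z}_0*} = P^{\mathcal{X}_0*} A^*$ (obtained from \eqref{relations-adjoint} by taking complements) combined with $A_0^* A_0^{-*} = I_{\mathcal{X}_0}$ collapses the expression to $-P^{\mathcal{X}_0*} Q_0 P^{\mathcal{X}_0} x = -Q P^{\mathcal{X}_0} x$, where the second equality uses \eqref{assumption2}. Uniqueness on $\mathcal{X}_0$ is immediate because $A_0^*$ has a bounded inverse.

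Part~(3) is the main technical step. I fix $x \in D(A)$, set $x_1 = P^{\mathcal{X}_1} x \in D(A_1)$, and premultiply the DRE \eqref{standard DRE} for $\Pi_1(t)$ by $P^{\mathcal{X}_1*}$. Each of the linear terms converts directly via the translation identities, and the terminal condition $\Pi_1(t_f) x_1 = G_1 x_1$ likewise lifts to \eqref{final-pi1-no-projection} by the same route. The delicate step is the quadratic term, which comes out of the DRE as $-P^{\mathcal{X}_1*} \Pi_1(t) \tilde{B}_1 \tilde{R}^{-1} \tilde{B}_1^* \Pi_1(t) x_1$ but is expected in \eqref{equation-pi1-no-projection} in the form $-E^* \tilde{\Pi}_1(t) B R^{-1} B^*\bigl(\tilde{\Pi}_1(t) E + \tilde{\Pi}_0\bigr) x$. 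I bridge the two by using the Sherman--Morrison-type identity $\tilde{R}^{-1} - R^{-1} = -R^{-1} \tilde{B}_0^* Q_0 \tilde{B}_0 \tilde{R}^{-1}$ and then substituting $B^*\tilde{\Pi}_0 = -\tilde{B}_0^* Q_0 P^{\mathcal{X}_0}$, so that the discrepancy produced by swapping $\tilde{R}^{-1}$ for $R^{-1}$ is exactly absorbed by the $\tilde{\Pi}_0$ cross term along the closed-loop trajectory characterised by the consistency relation. Continuity $\tilde{\Pi}_1(t) \in C([0,t_f]; \mathcal{L}(\mathcal{Z}))$ and the required differentiability of $\langle Ex^a, \tilde{\Pi}_1(t) E x^b\rangle_{\mathcal{Z}}$ are inherited from the corresponding properties of $\Pi_1(t)$ and the boundedness of $E_1^{-1}$ and the projections. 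Part~(4) follows at once: Lemma \ref{projections} identifies the minimum cost as $\langle (x_i)_1, \Pi_1(0)(x_i)_1\rangle_{\mathcal{X}}$, and the identity $P^{\mathcal{X}_1*} \Pi_1(0) P^{\mathcal{X}_1} = E^*\tilde{\Pi}_1(0) E$ rewrites this as $\langle E x_i, \tilde{\Pi}_1(0) E x_i\rangle_{\mathcal{Z}}$. The anticipated main obstacle is precisely the matching of the quadratic term in~(3), where the cancellation between $(\tilde{R}^{-1}-R^{-1})$ and the $\tilde{\Pi}_0$ cross term must be carried out while respecting the domains on which each composite operator is defined.
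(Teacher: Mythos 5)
Your proposal is correct and follows essentially the same route as the paper: parts 1, 2 and 4 are the same projection-translation computations, and your handling of the quadratic term in part 3 --- trading $\tilde{R}^{-1}$ for $R^{-1}$ and absorbing the discrepancy into the $B^*\tilde{\Pi}_0$ cross term via the consistency relation $x_0^{opt}=-\tilde{B}_0u^{opt}$ --- is exactly the paper's substitution \eqref{equivalent expression}, which the paper carries out along the optimal trajectory with arbitrary $x_i\in D(A)$ rather than at the operator level. The only difference is packaging (your up-front ``translation identities'' versus the paper's in-line use of \eqref{relations} and \eqref{part1-defintions}), not substance.
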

\begin{proof}
\noindent \textbf{1.} Using the definitions of the operators  $ \Tilde{B}_0 $ and $ \Tilde{R}$ ,  write the optimal control \eqref{optimal-control-x1-pi1} as follows:
\begin{align*}
  (R+  B^*_0  A_0^{-*}  Q_0   \Tilde{B}_0)  u^{opt}(t) =& -  \tilde{B}_1^* \Pi_1(t)   x_1^{opt}(t),
  \end{align*}
and so
  \begin{align*}
  u^{opt}(t) &= - R^{-1} \Big(\tilde{B}_1^* \Pi_1(t)   x_1^{opt}(t) + B^*_0  A_0^{-*}  Q_0  \Tilde{B}_0  u^{opt}(t)\Big) . \end{align*} 
Since $u^{opt}(t) \in \mathcal{U}_a$, we  use that $x_0^{opt}(t) = - \Tilde{B}_0 u^{opt}(t)$  to rewrite the previous equation
  \begin{align}
   u^{opt}(t) &= - R^{-1} \Big( \tilde{B}_1^* \Pi_1(t)   x_1^{opt}(t) -   B^*_0  A_0^{-*} Q_0   x_0^{opt}(t) \Big)  \nonumber \\
   &=  - R^{-1} \Big(\tilde{B}_1^* \Pi_1(t)   x_1^{opt}(t) + B^*_0  \Pi_0   x_0^{opt}(t) \Big),  \label{optimal-control-pi1e-pi0} \end{align}
   where we also used to the definition of operator $\Pi_0$ in \eqref{pi0}.  From   \eqref{relations}a, and \eqref{operators2}, recall that for any $x(t) \in \mathcal{X}$
   \begin{subequations} \label{part1-defintions}
           \begin{align}
    &E_1 x_1(t) = E P^{\mathcal{X}_1} x(t) = P^{\mathcal{Z}_1} E x(t), \\
  & B_1=P^{\mathcal{Z}_1}B, \quad B_0=P^{\mathcal{Z}_0} B .\end{align}
     \end{subequations}
 Using the  statements above and that $\tilde{B}_1=E_1^{-1} B_1 $,  rewrite \eqref{optimal-control-pi1e-pi0} as
\begin{align*}
    u^{opt}(t) &=-R^{-1} B^* \Big(  P^{\mathcal{Z}_1*} E_1^{-*} \Pi_1(t) E_1^{-1}  E P^{\mathcal{X}_1}   x^{opt} (t) +  P^{\mathcal{Z}_0*} \Pi_0  P^{\mathcal{X}_0} x^{opt}  (t)  \Big).
\end{align*} 
Equation \eqref{optimal-control-riccati-no-projection} now follows  from the definition of  $\Tilde{\Pi}_1(t)$ and  $\Tilde{\Pi}_0$ in \eqref{tilde}. 
\\

\noindent \textbf{2.} From \eqref{pi0},  for  any $ x_0 \in \mathcal{X}_0$,  
\begin{align} A_0^* \Pi_0 x_0  &=   -   Q_0 x_0.
\end{align}  
Writing $P^{\mathcal{X}_0} x =x_0$, we use \eqref{relation2} and \eqref{operators2} to write the previous equation as 
\begin{align}
  A^* P^{\mathcal{Z}_0*}  \Pi_0 P^{\mathcal{X}_0}  x + Q_0 P^{\mathcal{X}_0} x = 0.
\end{align} 
 Referring to \eqref{pi tilde 0}, we find that the operator $\Tilde{\Pi}_0$ solves equation \eqref{equation-pi0-no-projection}. Deducing that $\Tilde{\Pi}_0$  is the unique solution  of \eqref{equation-pi0-no-projection} on $\mathcal X_0 $ is straightforward.
   If $x_i \in D(A)$, then   $(x_i)_1 \in D(A_1)$.  It follows from equation \eqref{solutin-x1-opt} that $x_1^{opt}(t) \in D(A)$, and equation \eqref{standard DRE} implies 
\begin{align}
    \frac{d}{dt}    \Pi_1(t) x^{opt}_1 (t) +  \Pi_1(t)   E_1^{-1} A_1 x_1^{opt}(t) +  A_1^* E_1^{-*}\Pi_1(t)  x_1^{opt}(t) & \nonumber \\
     -  \Pi_1(t)   \Tilde{B}_1   \Tilde{R}^{-1} \tilde{B}_1^* \Pi_1(t)  x_1^{opt}(t) +     Q_1 x_1^{opt}(t)  &=0 . \label{equation4.26} 
     \end{align}
    Since the right-hand-sides of equations \eqref{optimal-control-x1-pi1} and \eqref{optimal-control-pi1e-pi0} are  equal, 
     $$  -\Tilde{R}^{-1} \tilde{B}_1^* \Pi_1(t)  x_1^{opt}(t) =- R^{-1} \Big(\tilde{B}_1^* \Pi_1(t)   x_1^{opt}(t) + B^*_0  \Pi_0   x_0^{opt}(t) \Big),  $$ 
     and so 
     \begin{align}
          -  \Pi_1(t)   \Tilde{B}_1   \Tilde{R}^{-1} \tilde{B}_1^* \Pi_1(t)  x_1^{opt}(t) = -  \Pi_1(t)   \Tilde{B}_1 R^{-1} \Big(\tilde{B}_1^* \Pi_1(t)   x_1^{opt}(t) + B^*_0  \Pi_0   x_0^{opt}(t) \Big). \label{equivalent expression} 
     \end{align}
     Using  \eqref{equivalent expression} to replace  the term  $-  \Pi_1(t)   \Tilde{B}_1   \Tilde{R}^{-1} \tilde{B}_1^* \Pi_1(t)  x_1^{opt}(t)$ in  equation \eqref{equation4.26},
     \begin{align}
       \frac{d}{dt}    \Pi_1(t) x_1^{opt}(t) +  \Pi_1(t)   E_1^{-1} A_1 x_1^{opt}(t) +  A_1^* E_1^{-*}\Pi_1(t)   x_1^{opt}(t)& \nonumber \\
     - \Pi_1(t)   \Tilde{B}_1  R^{-1} \Big(\tilde{B}_1^*  \Pi_1(t)  x_1^{opt}(t) + B^*_0  \Pi_0   x_0^{opt}(t) \Big)  +     Q_1x_1^{opt}(t) &=0  \label{DRE-1}
\end{align}

From \eqref{relation2}, recall that for any $x_1(t) \in D(A)$
\begin{align}
    A_1 x_1(t) = A P^{\mathcal{X}_1} x(t) = P^{\mathcal{Z}_1} A x(t).
\end{align}
Hence, using the statement above along with \eqref{part1-defintions}, we 
rewrite equation \eqref{DRE-1} as  
  \begin{align*}
& \frac{d}{dt}   E^* P^{\mathcal{Z}_1*}  E_1^{-*} \Pi_1(t) E_1^{-1} P^{\mathcal{Z}_1}E x^{opt}(t)       + E^* P^{\mathcal{Z}_1*} E_1^{-*} \Pi_1(t) E_1^{-1} P^{\mathcal{Z}_1}A   x^{opt}(t)  \nonumber \\
& +  A^* P^{\mathcal{Z}_1*}E_1^{-*} \Pi_1(t) E_1^{-1} P^{\mathcal{Z}_1} E x^{opt}(t)  \nonumber \\
& -   E^* P^{\mathcal{Z}_1*} E_1^{-*} \Pi_1(t) E_1^{-1}  P^{\mathcal{Z}_1} B R^{-1}  B^* P^{\mathcal{Z}_1*} E_1^{-*} \Pi_1(t) E_1^{-1}  P^{\mathcal{Z}_1} E x^{opt}(t)  \nonumber \\
& -  E^* P^{\mathcal{Z}_1*} E_1^{-*} \Pi_1(t) E_1^{-1} P^{\mathcal{Z}_1}  B R^{-1}     B^* P^{\mathcal{Z}_0*} \Pi_0  P^{\mathcal{X}_0}  x^{opt}(t) +           Q_1 P^{\mathcal{X}_1}  x^{opt}(t) = 0 .  
\end{align*}
Using the definitions of $\Tilde{\Pi}_1(t)$ and $\Tilde{\Pi}_0$ in  \eqref{tilde}, the previous equation yields
     \begin{align}
\Big( \frac{d}{dt}&  E^* \Tilde{\Pi}_1(t)E  +E^* \Tilde{\Pi}_1(t) A    + A^* \Tilde{\Pi}_1(t) E           -  E^* \Tilde{\Pi}_1(t) B R^{-1}  B^* \Tilde{\Pi}_1(t)E     \nonumber \\
&-   E^* \Tilde{\Pi}_1(t) B R^{-1}     B^* \tilde{\Pi}_0  +   Q_1 P^{\mathcal{X}_1}  \Big) x_{opt} (t) = 0    . \label{equation}
\end{align}
Since the initial condition $x_i$ was  arbitrary  in  $D(A)$, and  equation \eqref{equation} holds for all $t \geq 0 ,$   it follows that $\Tilde{\Pi}_1(t)$ solves \eqref{equation-pi1-no-projection}, where $\tilde{\Pi}_0$ solves \eqref{equation-pi0-no-projection}.

The final condition in \eqref{final-pi1-no-projection} is obtained  from \eqref{final-pi1} with the help of \eqref{relations}  and \eqref{assumption1}.\\

\noindent \textbf{3.}  To obtain the minimum cost \eqref{min-cost-no-projection},  we refer to statement \eqref{min-cost-E}. For any $x_0 \in \mathcal{X}$,
\begin{align}
    &  J(x_i,u^{opt} ; t_f)  =   \langle  E_1 (x_i)_1 ,   E_1^{-*} \Pi_1(0) E_1^{-1}  E_1 (x_i)_1\rangle _{\mathcal{X}}  . 
\end{align}
Using \eqref{pi tilde 1}, the previous equation yields
\begin{align}
    &  J(x_i,u^{opt} ; t_f)  =   \langle  E x_i ,   \tilde{\Pi}_1(0) E x_i\rangle _{\mathcal{Z}}  . 
\end{align}
\end{proof}

The natural question we now address is whether the differential equation \eqref{riccati-eqs-no-projection} has a unique solution. The following theorem shows that the solution to this equation is unique on the range of  $E$.

\begin{theorem} Recall the operator $\Tilde{\Pi}_0$ in \eqref{pi tilde 0} and  the differential equation \eqref{equation-pi1-no-projection}
 \begin{subequations} \label{differential-equation} 
\begin{align}
 \frac{d}{dt}  E^*  Z(t)  E x  + E^*  Z(t)   Ax +   A^* Z (t)  E x& - E^* Z(t)    B   R^{-1} B^* Z(t)   E x  \nonumber \\
   -E^* Z(t)    B   R^{-1} B^*  \Tilde{\Pi}_0x +     Q P^{\mathcal{X}_1} x &=0 , \qquad \forall x \in D(A) ,  \label{DE} \\
    E^* Z (t_f) E x&= G x   .  \label{final-condition-DE}
\end{align}  
\end{subequations} 
\begin{enumerate}
    \item Let    $\Tilde{\Pi}_1(t) \in C ([0,t_f]; \mathcal{L}(\mathcal{Z}))$ be as defined by \eqref{pi tilde 1} where $\langle  Ex^a,  \Tilde{\Pi}_1(t) Ex^b \rangle_{\mathcal{X}} $ is differentiable for all $x^a, \; x^b \in D(A)$ and $ t \in (0,t_f) $. Also,   let $$Z_2(t) \in C ([0,t_f];  \mathcal{L} (\mathcal{Z}_0, \mathcal{Z}_1)), \quad Z_4(t) \in C ([0,t_f]; \mathcal{L} (\mathcal{Z}_0)),$$ where $Z_4 (t) $ is arbitrary and $Z_2(t) $  solves 
    \begin{equation} Z_2(t) (A_0- B_0  R^{-1} B_0^* \tilde \Pi_0) =  Z_1(t) B_1 R^{-1} B_0^* \tilde \Pi_0 . \label{eq-Z2}
    \end{equation}
    The general solution to equation \eqref{differential-equation} is  
    \begin{align}
       Z(t) = \Tilde{\Pi}_1(t) +   P^{\mathcal{Z}_1*} Z_2 (t)  P^{\mathcal{Z}_0}  + P^{\mathcal{Z}_0*} Z_4 (t) P^{\mathcal{Z}_0}  . 
    \end{align}

      \item For any solution  $  Z(t)$ of the equation \eqref{differential-equation}, the operator-valued function $ Z(t) E \in \mathcal{L} (\mathcal{X}, \mathcal{Z}) $ is unique and  leads to the optimal control \eqref{optimal-control-riccati-no-projection}  
      \begin{align} 
           u^{opt}(t) 
   =&-R^{-1} B^* (\Tilde{\Pi}_0 +       Z(t) E)  x^{opt}(t). \label{optimal-control-general-X}
      \end{align}
      The minimum cost is
      \begin{align}
          J(x_i,u^{opt} ; t_f)  =   \langle  Ex_i ,    Z(0) E x_i\rangle _{\mathcal{Z}} \label{min-cost-any X}
      \end{align}
      \end{enumerate}  
\end{theorem}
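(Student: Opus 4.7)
The plan is to work in block form relative to the decompositions $\mathcal{X} = \mathcal{X}_1 \oplus \mathcal{X}_0$ and $\mathcal{Z} = \mathcal{Z}_1 \oplus \mathcal{Z}_0$. By Assumption~1 together with \eqref{assumption1}--\eqref{assumption2} and the fact that $E_0 = 0$, each operator appearing in \eqref{differential-equation} has a convenient block structure: $E = \diag(E_1, 0)$, $A = \diag(A_1, A_0)$, $Q = \diag(Q_1, Q_0)$, $\tilde{\Pi}_0$ carries only a $(2,2)$ block equal to $\Pi_0$, and $B$ stacks as $B_1, B_0$. I would write any candidate $Z(t)$ as
\begin{equation*}
Z(t) = \begin{pmatrix} Z_1(t) & Z_2(t) \\ Z_3(t) & Z_4(t) \end{pmatrix},
\end{equation*}
substitute into \eqref{differential-equation}, and read off four decoupled block equations.

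I expect the four blocks to behave as follows. The $(2,2)$ block will collapse to $0 = 0$, so $Z_4$ is genuinely free. The $(2,1)$ block will reduce to $A_0^* Z_3 E_1 = 0$; bounded invertibility of $A_0$ (Assumption~2) and of $E_1$ then forces $Z_3 = 0$, which is why the general solution carries no $P^{\mathcal{Z}_0*}(\cdot)P^{\mathcal{Z}_1}$ component. Cancelling the invertible $E_1^*$ on the left of the $(1,2)$ block will yield $Z_2(A_0 - B_0 R^{-1} B_0^* \Pi_0) = Z_1 B_1 R^{-1} B_0^* \Pi_0$, matching \eqref{eq-Z2} once one notes that $B_0^* \tilde{\Pi}_0$ acts as $B_0^* \Pi_0$ on $\mathcal{X}_0$. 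Under the identification $\Pi_1(t) = E_1^* Z_1(t) E_1$, the $(1,1)$ block should reproduce the standard Riccati equation \eqref{standard DRE}, so $Z_1(t) = E_1^{-*} \Pi_1(t) E_1^{-1}$, which is exactly the $(1,1)$ block of $\tilde{\Pi}_1(t)$ from \eqref{pi tilde 1}. Reassembling the four blocks produces the claimed general form.

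For part~2, the decisive observation is that $Ex \in \mathcal{Z}_1$ for every $x \in \mathcal{X}$, since $P^{\mathcal{Z}_1} E = E P^{\mathcal{X}_1}$ and $E_0 = 0$ imply $P^{\mathcal{Z}_0} E = 0$. Multiplying $Z(t) = \tilde{\Pi}_1(t) + P^{\mathcal{Z}_1*} Z_2(t) P^{\mathcal{Z}_0} + P^{\mathcal{Z}_0*} Z_4(t) P^{\mathcal{Z}_0}$ on the right by $E$ therefore annihilates both free terms, yielding $Z(t) E = \tilde{\Pi}_1(t) E$. Substituting this identity into the feedback formula \eqref{optimal-control-riccati-no-projection} and the cost formula \eqref{min-cost-no-projection} from Theorem~\ref{thm-general-p} delivers \eqref{optimal-control-general-X} and \eqref{min-cost-any X} directly.

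The hard part will be verifying that the $(1,1)$ block truly collapses to the standard DRE: it carries a cross term $E_1^* Z_2 B_0 R^{-1} B_1^* Z_1 E_1$ absent from \eqref{standard DRE}, and I need to show, after eliminating $Z_2$ through \eqref{eq-Z2}, that the combined quadratic piece equals $\Pi_1 \tilde{B}_1 \tilde{R}^{-1} \tilde{B}_1^* \Pi_1$. The key algebraic ingredient is $\tilde{R} = R - B_0^* \Pi_0 A_0^{-1} B_0$, which follows from $\Pi_0 = -A_0^{-*} Q_0$ and the definition $\tilde{R} = R + \tilde{B}_0^* Q_0 \tilde{B}_0$; a Woodbury expansion then supplies $\tilde{R}^{-1} = R^{-1} + R^{-1} B_0^* \Pi_0 (A_0 - B_0 R^{-1} B_0^* \Pi_0)^{-1} B_0 R^{-1}$, which is precisely what the cross term contributes, closing the argument.
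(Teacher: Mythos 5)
Your proposal is correct and follows essentially the same route as the paper: decompose $Z(t)$ into blocks via $\tilde P^{\mathcal{Z}}$, $\tilde P^{\mathcal{X}}$, read off the four block equations (giving $Z_3\equiv 0$, $Z_4$ free, $Z_2$ from \eqref{eq-Z2}, $Z_1=E_1^{-*}\Pi_1E_1^{-1}$ by uniqueness of \eqref{standard DRE}), and obtain part~2 from $P^{\mathcal{Z}_0}E=0$ together with Theorem~\ref{thm-general-p}. Your Woodbury computation showing that the $(1,1)$ block with its $E_1^*Z_2B_0R^{-1}B_1^*Z_1E_1$ cross term collapses, via \eqref{eq-Z2} and $\tilde R=R-B_0^*\Pi_0A_0^{-1}B_0$, to the quadratic term $\Pi_1\tilde B_1\tilde R^{-1}\tilde B_1^*\Pi_1$ is accurate and in fact supplies the detail the paper leaves implicit.
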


\begin{proof}
Decompose equation \eqref{DE} with the projections $\tilde{P}^{\mathcal{Z}}$ and $ \tilde{P}^{\mathcal{X}}$ in \eqref{P&D} and write an arbitrary solution $Z(t)$
$$Z(t) = \begin{bmatrix}
    Z_1 (t) & Z_2 (t) \\ Z_3 (t) & Z_4 (t) 
\end{bmatrix}$$ where 
\begin{align*}
&     Z_1(t) \in \mathcal{L}(\mathcal{Z}_1) , &\quad   Z_2(t) \in \mathcal{L}(\mathcal{Z}_1, \mathcal{Z}_0), \\
&     Z_3(t) \in \mathcal{L}(\mathcal{Z}_0,\mathcal{Z}_1 ) , &\quad   Z_4(t) \in \mathcal{L}(\mathcal{Z}_0). 
\end{align*}
Use the expression of $\Tilde{\Pi}_0$ in \eqref{pi tilde 0} and that $Q$ satisfies \eqref{assumption2} , to obtain 
\begin{align}
    &\frac{d}{dt}\begin{bmatrix}
         E_1^* & 0\\ 0 & 0 
     \end{bmatrix}\begin{bmatrix}
        Z_1(t) & Z_2(t) \\
          Z_3(t) & Z_4(t) \\
    \end{bmatrix}  \begin{bmatrix}
         E_1 & 0\\ 0 & 0 
     \end{bmatrix}  + \begin{bmatrix}
         E_1^* & 0\\ 0 & 0 
     \end{bmatrix}  \begin{bmatrix}
        Z_1(t) & Z_2(t) \\
          Z_3(t) & Z_4(t) \\
    \end{bmatrix}  \begin{bmatrix}
         A_1 & 0\\ 0 & A_0 
     \end{bmatrix}  \nonumber \\ 
    & +  \begin{bmatrix}
         A_1^* & 0\\ 0 & A^*_0 
     \end{bmatrix} \begin{bmatrix}
        Z_1(t) & Z_2(t) \\
          Z_3(t) & Z_4(t) \\
    \end{bmatrix}\begin{bmatrix}
         E_1 & 0\\ 0 & 0 
     \end{bmatrix} \nonumber \\
    & - \begin{bmatrix}
         E_1^* & 0\\ 0 & 0 
     \end{bmatrix} \begin{bmatrix}
        Z_1(t) & Z_2(t) \\
          Z_3(t) & Z_4(t) \\
    \end{bmatrix}   \begin{bmatrix}
        B_1 \\ B_0
    \end{bmatrix}   R^{-1}   \begin{bmatrix}
        B_1^* & B_0^*
    \end{bmatrix} \begin{bmatrix}
        Z_1(t) & Z_2(t) \\
          Z_3(t) & Z_4(t) \\
    \end{bmatrix} \begin{bmatrix}
         E_1 & 0\\ 0 & 0 
     \end{bmatrix}  \nonumber \\
 &  -\begin{bmatrix}
         E_1^* & 0\\ 0 & 0 
     \end{bmatrix}  \begin{bmatrix}
        Z_1(t) & Z_2(t) \\
          Z_3(t) & Z_4(t) \\
    \end{bmatrix} \begin{bmatrix}
        B_1 \\ B_0
    \end{bmatrix}   R^{-1}    \begin{bmatrix}
   0 & B_0^*  \Pi_0  P^{\mathcal{Z}_0}
    \end{bmatrix}       +   \begin{bmatrix}
         Q P^{\mathcal{X}_1} & 0\\ 0 & 0 
     \end{bmatrix}   = \begin{bmatrix}
        0& 0 \\ 0& 0
    \end{bmatrix} .\nonumber \\
    &\label{long equation} 
\end{align}
Similarly,  use the assumption on $G$ in \eqref{assumption1} to  decompose the final condition \eqref{final-condition-DE} as
\begin{align}
   \begin{bmatrix}
         G_1 & 0\\ 0 & 0 
     \end{bmatrix}  &=  \begin{bmatrix}
         E_1^* & 0\\ 0 & 0 
     \end{bmatrix} \begin{bmatrix}
        Z_1(t_f) & Z_2(t_f) \\
          Z_3(t_f) & Z_4(t_f) \\
    \end{bmatrix} \begin{bmatrix}
         E_1 & 0\\ 0 & 0 
     \end{bmatrix} \\
   &=   \begin{bmatrix}
      E_1^*  Z_1(t_f) E_1 & 0 \\
          0 & 0 \\
    \end{bmatrix} .
\end{align}
The resulting 4 equations yield  $Z_3 (t)\equiv0$, $Z_4(t) $  is arbitrary, $Z_2 (t)$ solves \eqref{eq-Z2} and,
recalling that  $\Pi_1(t) $ is the unique solution of \eqref{standard DRE},
\begin{align}
   Z_1(t)=   E_1^{-*} \Pi_1(t) E_1^{-1} . 
\end{align}

\noindent \textbf{2.} This assertion can be shown by substituting for the general solution of  \eqref{riccati-eqs-no-projection} in equation \eqref{optimal-control-general-X}. Using that $P^{\mathcal{Z}_0} E=0$, we arrive at the expression of the optimal control \eqref{optimal-control-riccati-no-projection}. Similarly, we can find the minimum cost from \eqref{min-cost-any X}.
\end{proof}

 


Equations \eqref{composition} and the projections $P^{\cX}$, $P^{\cZ}$ were used to derive \eqref{riccati-eqs-no-projection}. However, the restrictions of $Q$ on the subspaces  $\mathcal{X}_1$ and $\mathcal{X}_0$ are found, solving the system \eqref{riccati-eqs-no-projection} requires no knowledge of the projections.   

When comparing system \eqref{riccati-eqs-no-projection} with the optimality equations derived in finite-dimensional space, it is apparent that system \eqref{riccati-eqs-no-projection} differs from the ones  derived in \cite{bender1987linear, ACC} since the knowledge of the restrictions such as $E_1$, $A_1$, etc is not required here. Assuming no penalization on the $L_2$-norm of the algebraic state, i.e. $Q_0 P^{\mathcal{X}_0} = 0$, system \eqref{riccati-eqs-no-projection} reduces to a single differential Riccati equation that mirrors the one derived for  finite-dimensional DAEs through the use of  behaviors \cite{reis2019linear}.

\section{Numerical simulations}\label{section5}
In order to illustrate the theoretical results, we study the following class of coupled systems 
\begin{subequations} \label{coupled system}
    \begin{align}
    w_t(x,t) =&  w_{xx}(x,t) - \rho w(x,t) + \alpha v (x,t) + u(t) , \label{p1}\\
   0 =&  v_{xx}(x,t) - \gamma v(x,t) + \beta w(x,t) +u(t), \label{p2}
\end{align}
 with the  boundary conditions
\begin{align}
   w_x(0,t)=& 0, \quad w _x(1,t) =0, \label{p3}\\
    v_x (0,t)=&0, \quad v_x (1,t) =0,  \label{p4}
\end{align}
\end{subequations}
where $x \in [0,1]$ and $t \geq 0$. Also, $w(x,0)=\sin(\pi x)$ and $v(x,0)= \beta (\gamma - d_{xx})^{-1} \sin(\pi x).$ 
The system's parameters are \(\rho=1\), \(\gamma=\alpha =\beta=2\) . For all $n$, $\gamma \neq -(n \pi)^2$, it then $\gamma$ is not an eigenvalue of the operator $\partial_{xx}$ \cite[Example 8.1.8]{curtain2020introduction}. Consequently, defining $\mathrm{Z} =H_2(0,1)$ and $I$ to be  the identity operator on $\mathrm{Z}$, the operator $ \gamma I - \partial_{xx}$ is invertible. It will  also prove useful to define 
\begin{equation*}
    A_d w = \frac{d^2}{dx^2} w, \; \; D(A_d)=\{w \in  \mathrm{Z}: w_x(0)=w_x(1)=0 \},
\end{equation*}
and 
\begin{align*}
    x(t) = \begin{bmatrix}
        w(x,t) \\ v(x,t)
    \end{bmatrix} \in \mathcal{X}=\mathrm{Z}\times \mathrm{Z}.
\end{align*}
System \eqref{coupled system} can now be written in the form  \eqref{1}
    \begin{align*}
    \frac{d}{dt} \underbrace{\begin{bmatrix}
        I  & 0 \\ 0 &0 \end{bmatrix}}_{E} x(t)&= \underbrace{\begin{bmatrix}
        A_d- \rho I  && \alpha I \\   \beta I &&   A_d- \gamma I 
    \end{bmatrix} }_{A} x(t)+  \underbrace{\begin{bmatrix}
        I  \\ I
    \end{bmatrix}}_{B} u(t),   \nonumber \\
    x(0) &= \begin{bmatrix}
        \sin(\pi x) & \beta (\gamma - d_{xx})^{-1} \sin(\pi x)
    \end{bmatrix}^*
\end{align*}
 System \eqref{coupled system}  was shown in \cite[section 4 ]{jacob2022solvability} to be  radial of degree 0.  
 To approximate this coupled system, use finite-element method with linear splines to obtain a system of DAEs. The spatial interval $[0,1]$ is subdivided into 27 equal intervals. The dynamics of the parabolic and elliptic states without control (i.e. $u(t) \equiv 0$) is given in Figure \ref{LQ} a \& b.

 Define the  cost functional
\begin{align}
J(x_i,u ; 6) =&   \int_{0}^{6} \langle x(s),  \begin{bmatrix}
    I&0\\0&0
\end{bmatrix}  x(s)\rangle_{\mathrm{Z}}  +  \langle u(s),  u(s)\rangle_{\mathcal{U}} ds . \label{cost-numerical}
\end{align}
Comparing the previous cost with \eqref{cost}, it is clear that $t_f=6$, $Q=   \begin{bmatrix}
    I&0\\0&0
\end{bmatrix}$ , $G= \begin{bmatrix}
    0&0\\0&0
\end{bmatrix}$.  Note that projections $P^{\cX}$ and $P^{\cZ}$, which were calculated in \cite[section 4 ]{jacob2022solvability}, can be used used  to obtain
\begin{align}
   Q P^{\mathcal{X}_1}= \begin{bmatrix}
    I&0\\0&0
\end{bmatrix}, \quad    Q P^{\mathcal{X}_0}= \begin{bmatrix}
    0&0\\0&0
\end{bmatrix}  .
\end{align}
To demonstrate the finding in the previous sections, we solve system \eqref{riccati-eqs-no-projection} for the optimal control \eqref{optimal-control-riccati-no-projection}. Since this system consists of  operator equations on an infinite-dimensional Hilbert space, it cannot be solved exactly. Therefore, the control is calculated using an finite-dimensional approximation. 
The convergence of the approximation method  to  the true optimal one and closed-loop performance can be discussed by doing  calculations of different approximation order and following a similar approach as in \cite[Chapter 4]{morris2020controller}. This task will be addressed in the future work. 
Without the need of decomposing the state $x(t)$ or calculation operators $E_1$, $A_1$ etc, we now solve the finite-dimensional approximation of system \eqref{riccati-eqs-no-projection}. This is done via using ``ode15s" which is based on a backward differential formula (BDF).  Consequently, we obtain an approximation of  the optimal control \eqref{optimal-control-riccati-no-projection}. 
The approximated control signal   at $x=0$ \eqref{optimal-control-riccati-no-projection}  is given  in Figure \ref{optimal-control-DRE}. Note that control $u(t)$ ensures the consistency statement on the initial conditions, which is  
\begin{align*}
    u(x,0)&= \frac{(d_{xx} - \gamma I)}{\beta} v(x,0) - \beta w(x,0) \\
    &= 0.
\end{align*}
Figure \eqref{LQ} c \& d illustrates the dynamics of the coupled system  after applying  control \eqref{optimal-control-riccati-no-projection}.

\begin{figure}[H]
    \centering
    \includegraphics[scale=0.5]{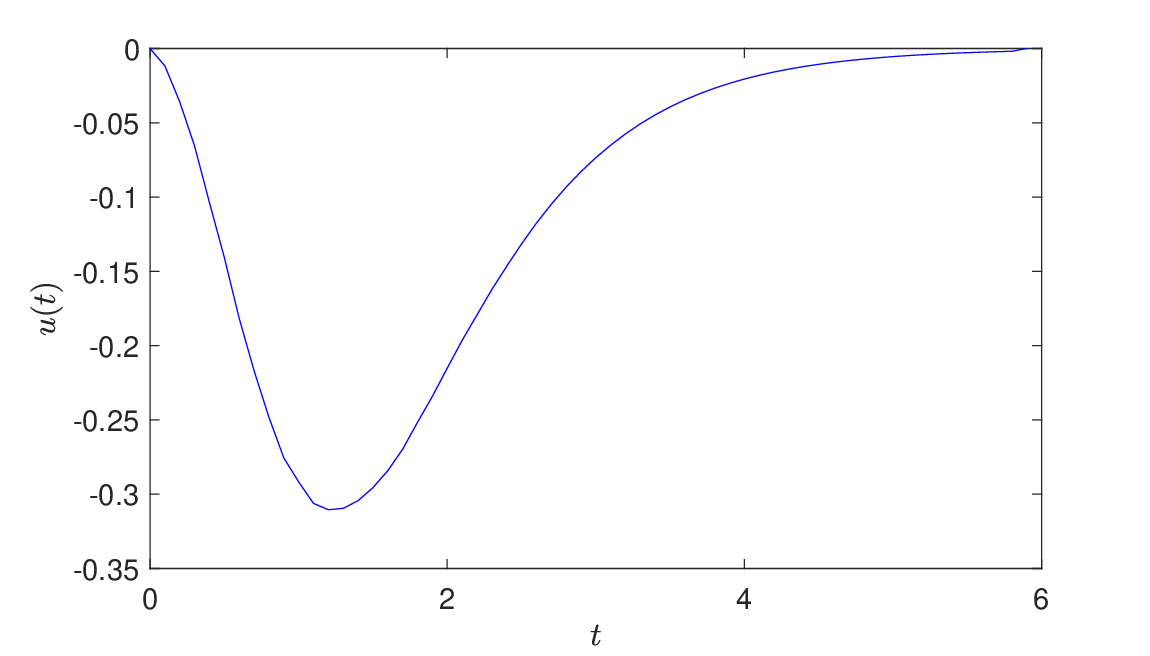}
    \caption{LQ-optimal feedback control at $x=0$ . This control signal minimizes the cost functional  \eqref{cost-numerical}, and is derived by solving system \eqref{riccati-eqs-no-projection} after discretization.   }
    \label{optimal-control-DRE}
\end{figure}
\begin{figure*}[htb]
\centering
\begin{minipage}[b]{.45\linewidth}
  \centering
  \includegraphics[width=\linewidth]{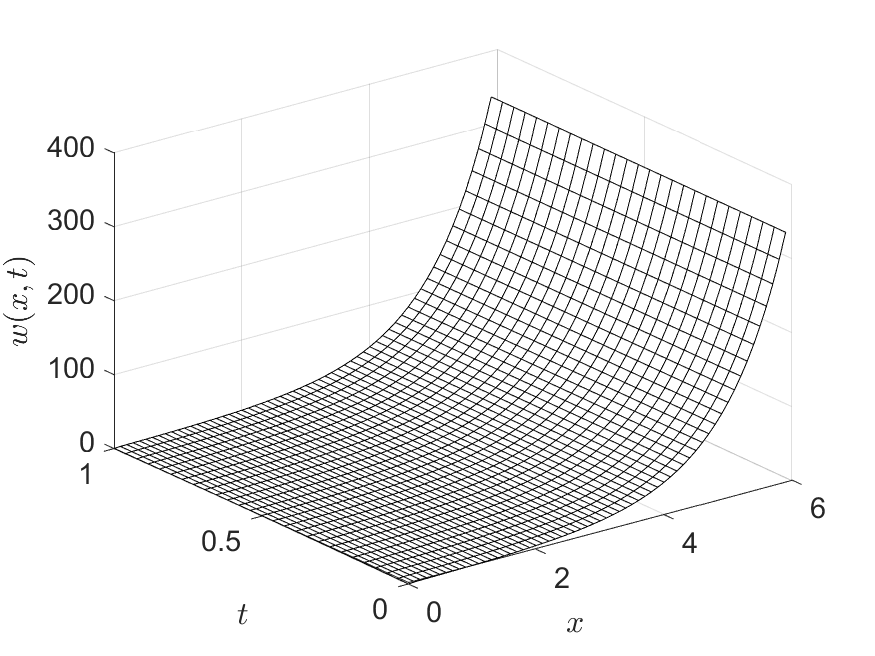}
  \\(a) Uncontrolled $w(x,t)$
\end{minipage}%
\hfill 
\begin{minipage}[b]{.45\linewidth}
  \centering
  \includegraphics[width=\linewidth]{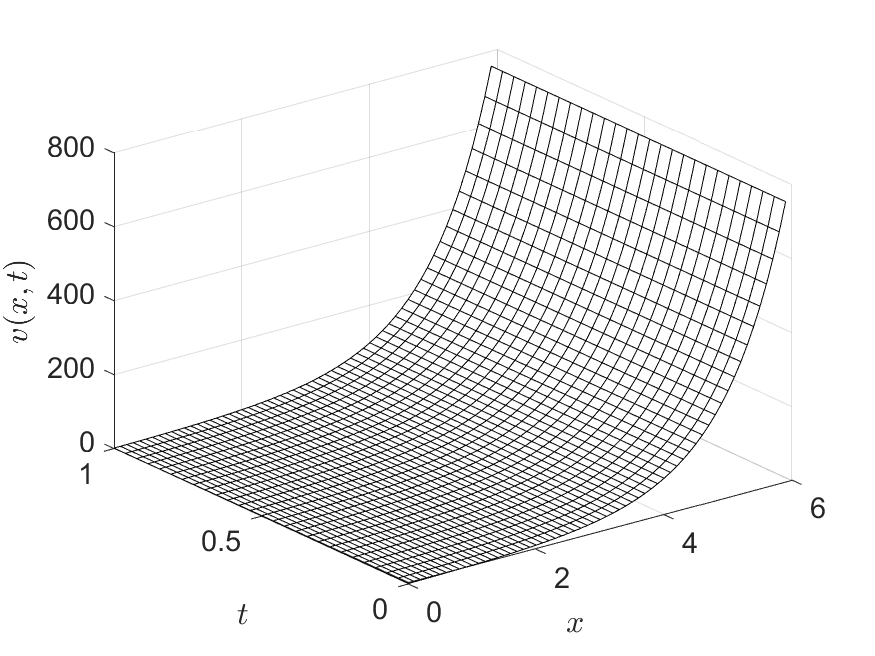}
  \\(b) Uncontrolled $v(x,t)$
\end{minipage}
\newline\newline 
\begin{minipage}[b]{.45\linewidth}
  \centering
  \includegraphics[width=\linewidth]{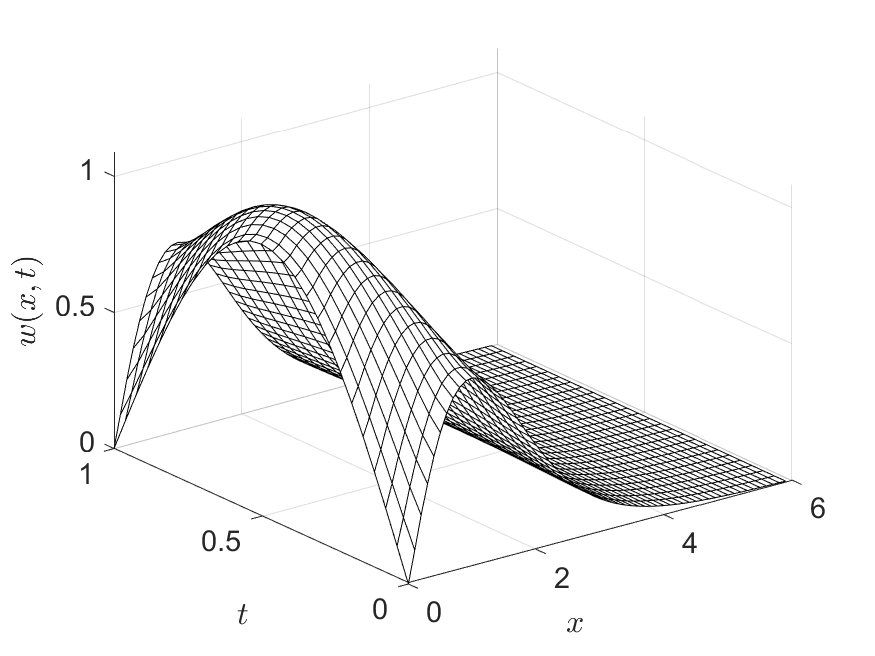}
  \\(c) Controlled $w(x,t)$
\end{minipage}
\hfill 
\begin{minipage}[b]{.45\linewidth}
  \centering
  \includegraphics[width=\linewidth]{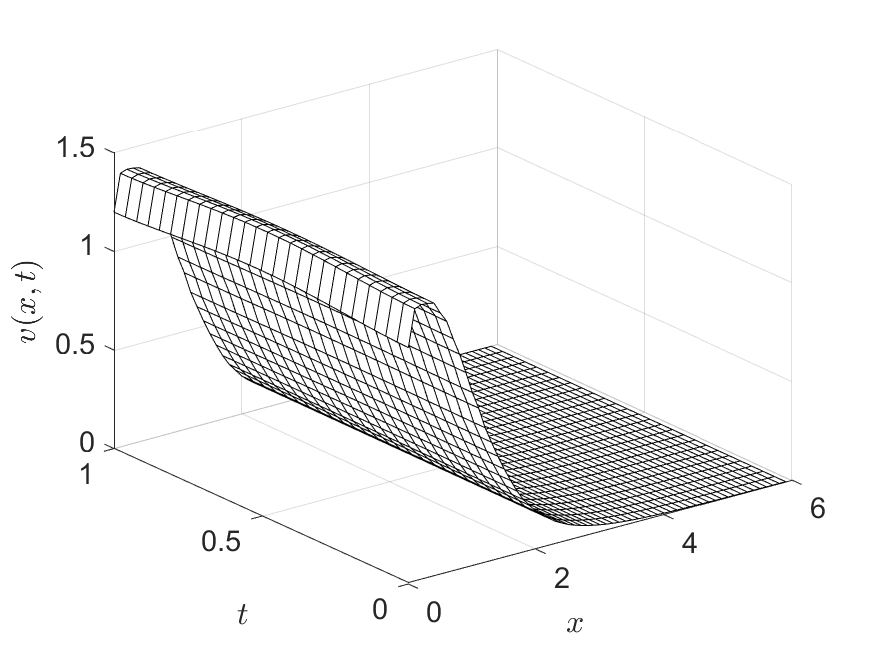}
  \\(d) Controlled $v(x,t)$
\end{minipage}
\caption{A 3D landscape of the dynamics of a coupled parabolic-elliptic system with initial condition \(w_0=\sin(\pi x)\), \(v_0= \beta (d_{xx} - \gamma I)^{-1} \sin(\pi x)\), without and with control \eqref{optimal-control-riccati-no-projection}. The parameters of the system are \(\rho=1\), \(\gamma=\alpha =\beta=2.\) The uncontrolled system is unstable, but the use of LQ feedback control causes the states to decay towards zero.}
\label{LQ}
\end{figure*}

\section{Conclusions}

This paper  extends the classical finite-time linear quadratic control problem for finite-dimensional DAEs into the  infinite-dimensional case. We showed the existence of  a continuous optimal control that ensures the consistency of the initial conditions while  minimizing the cost functional. Decomposing  the PDAE into a  Weierstra$\beta$  canonical form was crucial in the proofs. 
However, the optimal control can be calculated  derived differential Riccati equation without projecting of any operators other than the state weight.
Future work will consider the minimization problem on an infinite-time horizon. 

The class of PDAEs  considered here was restricted to those with radiality-index  zero. We aim to extend the results to higher-index PDAEs in future research. This already been done \cite{ACC} for finite-dimensional DAEs without the use of behaviors.
Another prospective research problem is to examine the linear quadratic control problem for linear PDAEs in situations where the operator associated with the penalization on the $L_2$-norm of the control input is not necessarily invertible, since as illustrated in \cite{reis2019linear} a unique solution can exist for the optimization problem in the finite-dimensional situation.  Finally, since  the derived differential Riccati equation is an  operator equation on an infinite-dimensional Hilbert space, numerical treatment for solving needs further attention. 

\bibliographystyle{siam}
\bibliography{ex_shared}
\end{document}